\newtheorem{theorem}{Theorem}[section]
\newtheorem{corollary}[theorem]{Corollary}
\newtheorem{lemma}[theorem]{Lemma}
\newtheorem{definition}[theorem]{Definition}
\newtheorem{examples}[theorem]{Examples}
\numberwithin{equation}{section}
\def\R{\mathbb{R}}
\def\r{\mathbb{R}}
\def\rn{\mathbb{R}^N}
\def\z{\mathbb{Z}}
\def\z2{\mathbb{Z}_2}
\def\s1{\mathbb{S}^1}
\def\n{\mathbb{N}}
\def\cc{\mathbb{C}}
\def\eps{\varepsilon}
\def\rh{\rightharpoonup}
\def\io{\int_{\Omega}}
\def\irn{\int_{\r^N}}
\def\vp{\varphi}
\def\vr{\varrho}
\def\o{\Omega}
\def\cA{\mathcal{A}}
\def\cB{\mathcal{B}}
\def\cC{\mathcal{C}}
\def\cE{\mathcal{E}}
\def\cN{\mathcal{N}}
\def\cO{\mathcal{O}}
\def\cP{\mathcal{P}}
\def\cU{\mathcal{U}}
\def\cW{\mathcal{W}}
\def\cY{\mathcal{Y}}
\def\cZ{\mathcal{Z}}
\def\what{\widehat}
\def\d{\,\mathrm{d}}
\def\dist{\mathrm{dist}}
\def\dim{\mathrm{dim}}
\def\gen{\mathrm{genus}}
\author{Mónica Clapp  \ and \ Carlos Culebro}
\title{Multiple nodal solutions to a scalar field equation with double-power nonlinearity and zero mass at infinity}
\date{}
\begin{document}
\maketitle

\begin{abstract}
We consider the nonlinear elliptic equation
\begin{equation*}
-\Delta u + V(x)u = f(u), \qquad u\in D^{1,2}_0(\o),
\end{equation*}
in an exterior domain $\o$ of $\rn$, where $V$ is a scalar potential that decays to zero at infinity and the nonlinearity $f$ is subcritical at infinity and supercritical near the origin. 
Under weak symmetry assumptions, we provide conditions that guarantee that this problem has a prescribed number of sign-changing solutions. In particular, we show that in dimensions $N\geq 4$ there are numerous examples of exterior domains with finite symmetries in which the problem has a predetermined number of nodal solutions.
\smallskip

\textbf{Keywords:} Scalar field equations, zero mass, superlinear, double-power nonlinearity, nodal solutions,
variational methods.
\smallskip

\textbf{MSC 2010:} 35Q55, 35B06, 35J20.
\end{abstract}
	
\section{Introduction}
	
This paper is concerned with the existence of multiple sign-changing solutions to the problem
\begin{equation}
\label{eq:problem}
-\Delta u + V(x)u = f(u), \qquad u\in D^{1,2}_0(\o),
\end{equation}
in a domain $\o$ of $\rn$, $N\geq 3$, where  $V$ is a scalar potential that decays to zero at infinity and the nonlinearity $f$ is subcritical at infinity and supercritical near the origin. The space $D^{1,2}_0(\o)$ is the closure of $\cC_c^\infty(\o)$ in the Sobolev space $D^{1,2}(\rn):=\{u\in L^{2^*}(\rn):\nabla u\in L^2(\rn,\rn)\}$, equipped with its usual norm
\begin{equation} \label{eq:usual norm}
\|u\|:=\Big(\irn|\nabla u|^2\Big)^{1/2},
\end{equation}
and $2^*:=\frac{2N}{N-2}$ is the critical Sobolev exponent.

Such equations arise, for instance, in some particle physics problems related to the non-Abelian gauge theory underlying strong interaction, called quantum chromodynamics or QCD. Their solutions lead to some special solutions of the pure Yang-Mills equations via the 't Hooft Ansatz; see \cite{g}.

In their seminal paper \cite{bl} Berestycki and Lions showed that, when $\o=\rn$ and $V=0$, this problem has a ground state solution which is positive, radially symmetric and decreasing in the radial direction. 

For $\o=\rn$ Badiale and Rolando established the existence of a positive radial solution to \eqref{eq:problem} when $V$ is radial \cite{br}. Without assuming any symmetries on $V$, Benci, Grisanti and Micheletti showed in \cite{bgm1} that, under suitable hypotheses, \eqref{eq:problem} has a positive ground state if $V\leq 0$ and that it has no ground state if $V\geq0$ and $V\neq 0$. The existence of a positive bound state was established in \cite{cm} whenever $V$ decays to zero at a suitable rate and the limit problem (i.e., \eqref{eq:problem} with $V\equiv 0$) has a unique positive solution.

In an exterior domain $\o$ the existence of a positive solution to the problem \eqref{eq:problem} with $V=0$ was first established by Benci and Micheletti in \cite{bm} for domains whose complement has a sufficiently small diameter, and was then extended to general exterior domains by Khatib and Maia in \cite{km}. The existence of multiple positive solutions in the complement of a ball, the number of which increases as the radius of the ball increases, was shown in \cite{cmp1}.

For $\o=\rn$ and $V=0$ there are also some results on sign-changing solutions. Taking advantage of the symmetries that \eqref{eq:problem} presents in this case, Mederski established the existence of a non-radial sign-changing solution if $N\geq 4$ and of infinitely many such solutions if, in addition, $N\neq 5$; see \cite{m1,m2}. This last result is based on the fact that in such dimensions there exist groups $G$ of linear isometries that admit an involution and have the property that the $G$-orbit of each point $x\neq 0$ in $\rn$ has infinite cardinality. As shown by Bartsch and Willem in \cite{bw}, this type of symmetries produce a sign change by construction.

A delicate aspect when approaching problem \eqref{eq:problem} using variational methods is the lack of compactness of the energy functional. Considering symmetries with infinite $G$-orbits restores compactness and allows applying standard variational methods to obtain infinitely many solutions. On the other hand, when any group action on the domain has finite $G$-orbits, the lack of compactness prevails and such methods cannot be applied. To our knowledge, there are no results on multiple solutions in such case.

The main objective of this work is to establish the existence of \emph{multiple sign-changing solutions} in exterior domains that are invariant under a group $G$ of linear isometries and \emph{have finite $G$-orbits}. Recall that $\o$ is an exterior domain if its complement is bounded (possibly empty). 

We make the following assumptions about the potential and the nonlinearity. 

\begin{itemize}
\item[$(V_1)$] $V\in L^{N/2}(\rn)\cap L^r(\rn)$ for some $r > N/2$ and $\irn|V^-|^{N/2} < S^{N/2}$, where $V^-:=\min\{0,V\}$ and $S$ is the best constant for the Sobolev embedding $D^{1,2}(\rn)\hookrightarrow L^{2^*}(\rn)$.
\item [$(f_1)$] $f\in\cC^1(\r)$ and there are $A_1>0$ and $2<p<2^*<q$ such that, for $m=-1,0,1,$
\begin{equation}
|f^{(m)}(s)| \leq 
\begin{cases}
A_1|s|^{p-(m+1)} &\text{if \ } |s|\geq 1, \\
A_1|s|^{q-(m+1)} &\text{if \ } |s| \leq 1, 
\end{cases}
\end{equation}
where $f^{(-1)}:=F$, $f^(0):=f $, $f^{(1)}:= f'$ and $F(s):= \int_{0}^{s} f(t)\d t$.
\item [$(f_2)$] There exists $\theta >2$ such that $0<\theta F(s)\leq f(s)s<f'(s)s^2$ for all $s\neq 0$. 
\item [$(f_3)$] $f$ is odd, i.e., $f(-s)=-f(s)$ for all $s\in\r$.
\end{itemize}

Let $G$ be a closed subgroup of the group $O(N)$ of linear isometries of $\rn$. Recall that the $G$-orbit of a point $x\in\rn$ is the set
$$Gx:=\{gx:g\in G\}.$$
We write $\#Gx$ for its cardinality. A domain $\o$ is said to be $G$-invariant if $Gx\subset\o$ for every $x\in\o$ and a function $u:\o\to\r$ is $G$-invariant if it is constant on each $G$-orbit of $\o$.

Our first result regards bounded domains, possibly without symmetries, and exterior domains having symmetries with infinite orbits.

\begin{theorem} \label{thm:main1}
Let $V$ and $f$ satisfy $(V_1), (f_1), (f_2)$ and $(f_3)$. Assume that  $\o$ and $V$ are $G$-invariant, where $G$ is a closed subgroup of $O(N)$. If, either
\begin{itemize}
\item $\o$ is bounded, or
\item $\o$ is an exterior domain and $\#Gx = \infty$ for every $x\in\rn\smallsetminus\{0\}$,
\end{itemize}
the problem \eqref{eq:problem} has one positive and infinitely many sign-changing $G$-invariant solutions. The positive one has least energy among all nontrivial $G$-invariant solutions.
\end{theorem}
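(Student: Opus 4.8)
The plan is to work with the energy functional
\[
J(u):=\frac12\irn\bigl(|\nabla u|^2+V(x)u^2\bigr)\d x-\irn F(u)\d x
\]
restricted to the subspace $D^{1,2}_{0,G}(\o)$ of $G$-invariant functions in $D^{1,2}_0(\o)$. First I would check that $(V_1)$ and $(f_1)$ make $J$ well defined and of class $\cC^1$ on this space: the bilinear form $\irn V u^2$ is continuous because $V\in L^{N/2}$ and $D^{1,2}\hookrightarrow L^{2^*}$, and the condition $\irn|V^-|^{N/2}<S^{N/2}$ together with the Sobolev inequality guarantees that $u\mapsto\irn(|\nabla u|^2+Vu^2)$ is equivalent to $\|u\|^2$, i.e.\ the quadratic part is positive definite. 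The growth condition $(f_1)$ at $|s|\le1$ uses the exponent $q>2^*$, which is exactly what makes $F$ controllable near the origin after interpolating between $L^{2^*}$ and a higher Lebesgue space; here the supercriticality at the origin is harmless because $F(s)=O(|s|^q)$ there and $q>2^*$, so $F(u)$ is integrable for $u\in L^{2^*}$ provided $u$ also lies in some $L^{q'}$ with $q'\le 2^*$, which holds automatically. Under $(f_2)$ (the Ambrosetti--Rabinowitz condition $0<\theta F(s)\le f(s)s$) the functional $J$ has the mountain-pass geometry on $D^{1,2}_{0,G}(\o)$, and the inequality $f(s)s<f'(s)s^2$ makes the Nehari manifold $\cN_G:=\{u\ne0:J'(u)u=0\}$ a natural $\cC^1$ constraint on which critical points of $J$ coincide with nontrivial critical points of $J$, and each ray from the origin meets $\cN_G$ in exactly one point.

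The central point is compactness. Under either hypothesis the embedding $D^{1,2}_{0,G}(\o)\hookrightarrow L^{2^*}(\rn)$ (or into the relevant sum of Lebesgue spaces on which $F$ acts) is compact, so that $J|_{\cN_G}$ satisfies the Palais--Smale condition. When $\o$ is bounded this is just the Rellich--Kondrachov theorem applied on the bounded set $\o$, with no symmetry needed. When $\o$ is an exterior domain with $\#Gx=\infty$ for every $x\ne0$, compactness follows from the principle of symmetric criticality together with a concentration-compactness argument of Bartsch--Willem / Lions type: a bounded $G$-invariant sequence that does not converge strongly would, by the vanishing alternative, carry its mass off to infinity along a sequence of points $x_n$, but then the whole infinite orbit $Gx_n$ carries comparable mass, contradicting boundedness of the $L^{2^*}$ norm. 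I expect this to be the main obstacle and the heart of the argument, and I would isolate it as a separate compactness lemma. Once Palais--Smale holds on $\cN_G$, the infimum of $J$ over $\cN_G$ is attained; by the principle of symmetric criticality the minimizer is a genuine (weak) solution of \eqref{eq:problem}, and a standard argument — replacing $u$ by $|u|$, which does not increase $J$ on $\cN_G$ and remains $G$-invariant, followed by the maximum principle and elliptic regularity — produces a positive $G$-invariant solution of least energy among all nontrivial $G$-invariant solutions.

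Finally, to obtain infinitely many sign-changing $G$-invariant solutions I would exploit the oddness $(f_3)$: $J$ is even, so $J|_{\cN_G}$ is an even functional on the (now $\z_2$-)symmetric $\cC^1$-manifold $\cN_G$, which is complete and satisfies Palais--Smale, and is bounded below on itself. A standard Krasnoselskii-genus / Ljusternik--Schnirelmann minimax scheme on $\cN_G$ (as in Bartsch--Willem, Ambrosetti--Rabinowitz) then yields an unbounded sequence of critical values $c_1\le c_2\le\cdots\to\infty$ and hence infinitely many pairs $\pm u_k$ of $G$-invariant solutions. All but at most one of these must change sign: a nonnegative solution, after the maximum principle, would be strictly positive and hence equal (up to sign) to the least-energy one, so only one critical level can be realized by a one-signed solution. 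The remaining routine points — that the genus levels are finite and strictly increasing to $+\infty$, which uses the compactness of the embedding to bound the genus of sublevel sets — are handled exactly as in the cited references, and I would not reproduce those calculations.
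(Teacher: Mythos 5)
Your setup (equivalent norm under $(V_1)$, Nehari manifold via $(f_2)$, least-energy positive solution by minimization plus $|u|$ and the maximum principle) and your compactness plan are essentially the paper's: for bounded $\o$ the point is not compactness of $D^{1,2}_0(\o)\hookrightarrow L^{2^*}$ (which fails even on bounded domains) but compactness of the superposition $u\mapsto\int F(u)$ coming from the double-power bound $p<2^*<q$, and for exterior domains the paper runs a splitting argument with the vanishing lemma of \cite{cm} and the orbit lemma of \cite{ccs}, which is the rigorous version of your ``infinitely many far-apart translates each carry fixed local mass'' heuristic. Up to these (repairable) imprecisions, that part of your plan is sound.

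The genuine gap is in the last step, where you claim that a standard Krasnoselskii-genus/Ljusternik--Schnirelmann scheme on $\cN(\o)^G$ produces infinitely many solutions of which ``all but at most one must change sign,'' justified by the assertion that any one-signed solution must coincide, up to sign, with the least-energy positive one. That assertion amounts to uniqueness of the positive $G$-invariant solution, which is neither assumed nor true in general for this problem (the paper itself cites \cite{cmp1}, where exterior domains carry several positive solutions); hence nothing prevents the minimax critical points produced by plain genus theory from being positive at many, even infinitely many, levels. Since $G$ is not assumed to admit an involution acting without nonzero fixed points, the Bartsch--Willem mechanism that forces sign changes ``by construction'' is also unavailable. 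This is exactly the difficulty the paper is designed to overcome: it replaces ordinary genus by a relative genus $\gen(\cZ_d^G,\cZ_0^G)$ built from the sets $\cZ_d^G=B_\alpha(\cP^G)\cup B_\alpha(-\cP^G)\cup I_V^d$, proves (Lemma \ref{lem:equator}) that the cone neighborhoods $B_\alpha(\pm\cP^G)$ are strictly positively invariant under the negative gradient flow and disjoint from $\cE(\o)^G$, and then shows (Theorems \ref{thm:genus} and \ref{thm:nodal principle}, via a Borsuk--Ulam argument on finite-dimensional subspaces supported in bounded invariant subsets) that this invariant detects only critical points lying in $\cE(\o)^G$, i.e.\ genuinely sign-changing ones. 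Without this (or some substitute, e.g.\ a nodal Nehari set argument with its own compactness analysis), your proposal proves the existence of infinitely many $G$-invariant solutions but not that infinitely many of them change sign, so the stated theorem does not follow.
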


\begin{examples}
The following groups satisfy $\#Gx = \infty$ for every $x\in\rn\smallsetminus\{0\}$.
\begin{itemize}
\item[$(a)$] If $G=O(N)$, then $Gx= S_{|x|}^{N-1}:=\{y\in\rn:|y|=|x|\}$, the sphere of radius $|x|$ in $\rn$ centered at the origin.
\item[$(b)$] If $N=n_1+\cdots+n_m$ with $n_i\geq 2$ for all $i$, and $G=O(n_1)\times\cdots\times O(n_m)$, then 
$$Gx= S^{n_1-1}_{|x_1|}\times\cdots\times S^{n_m-1}_{|x_m|}\qquad\text{for every \ }x=(x_1,\ldots,x_m)\in\r^{n_1}\times\cdots\times\r^{n_m}\equiv\rn.$$
\end{itemize}
\end{examples}

These examples apply, in particular, when $\o=\rn$.

Our next result applies to groups that have finite orbits. It ensures the existence of a prescribed number of solutions if the cardinality of the orbits of the domain is sufficiently large. 

We write $B_R:=\{x\in\rn:|x|<R\}$.

\begin{theorem} \label{thm:main2}
Assume that $V$ is radial and that $V$ and $f$ satisfy $(V_1),(f_1), (f_2)$ and $(f_3)$. Given $R>0$, there exists an increasing sequence $(\ell_m)$ of positive real numbers, depending only on $R$, with the following property: If $\rn \smallsetminus\o\subset B_R$ and there exists a closed subgroup $G$ of $O(N)$ such that $\o$ is $G$-invariant and
$$\min_{x\in\rn\smallsetminus\{0\}}\# Gx > \ell_m,$$
then the problem \eqref{eq:problem} has at least $m$ pairs $\pm u_1,\ldots,\pm u_m$ of $G$-invariant solutions such that $u_1$ is positive and has least energy among all nontrivial $G$-invariant solutions, and $u_2,\ldots,u_m$ change sign. Their energy satisfies
$$\frac{1}{2}\io (|\nabla u_j|^2+Vu_j^2)-\io F(u_j)\leq\ell_jc_\infty,\qquad\text{for each \ }j=1,\ldots,m,$$
where $c_\infty$ is the ground state energy of the limit problem 
\begin{equation}
\label{eq:limit problem}
-\Delta u = f(u), \qquad u\in D^{1,2}(\rn).
\end{equation}
\end{theorem}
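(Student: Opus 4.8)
The strategy is to combine the variational machinery behind Theorem~\ref{thm:main1} with a quantitative comparison between the $G$-invariant minimax levels and the ground state energy $c_\infty$ of the limit problem~\eqref{eq:limit problem}. The natural functional is
\[
J(u)=\frac{1}{2}\io\big(|\nabla u|^2+Vu^2\big)-\io F(u),
\]
defined on the $G$-invariant subspace $D^{1,2}_0(\o)^G$. Under $(V_1)$ this subspace is a Hilbert space with norm equivalent to $\|\cdot\|$, and under $(f_1)$–$(f_2)$ the functional $J$ has the mountain pass geometry, is even by $(f_3)$, and its critical points are (by the principle of symmetric criticality) genuine $G$-invariant solutions of~\eqref{eq:problem}. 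For each $j$ I would define the symmetric minimax value
\[
c_j:=\inf_{\substack{A\subset\cM^G\\ \gen(A)\geq j}}\ \sup_{u\in A} J(u),
\]
where $\cM^G$ is the $G$-invariant Nehari manifold and $\gen$ is the Krasnoselskii genus; the $c_j$ form a nondecreasing sequence, and a standard deformation argument (together with the fact, already behind Theorem~\ref{thm:main1}, that the even functional satisfies the Palais–Smale condition below certain energy thresholds in these symmetric classes) yields a pair $\pm u_j$ of critical points at level $c_j$, with $u_1$ the positive least-energy $G$-invariant solution and $u_j$ sign-changing for $j\geq2$ as soon as $c_j>c_1$.

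The heart of the matter is the energy estimate $c_j\leq\ell_j c_\infty$, and the construction of the numbers $\ell_m$. The idea is the classical one of building, from the ground state $w$ of the limit problem~\eqref{eq:limit problem}, a $j$-parameter family of $G$-invariant test functions obtained by placing scaled copies of $w$ centered at points $\xi$ with $|\xi|$ large and symmetrizing over the $G$-orbit $G\xi$. If the orbit $G\xi$ has at least $k$ elements that are pairwise far apart, then such a symmetrized bump carries energy close to $k\,c_\infty$ (the interaction between distinct bumps is negligible, the potential term is small because $V$ is radial and decays, and $\o$ contains the relevant region since $\rn\setminus\o\subset B_R$ and $|\xi|$ is large). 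Taking $j$ such orbits at very different radii produces a genus-$j$ set in $\cM^G$ whose functional values are controlled; one then \emph{defines} $\ell_m$ so that this bound reads $c_j\le\ell_j c_\infty$ for $j\le m$, with $\ell_m$ depending only on $R$ (through how large the radii must be to beat the $B_R$-obstacle and the decay of $V$) and increasing in $m$. The hypothesis $\min_{x\neq0}\#Gx>\ell_m$ guarantees every orbit used is large enough for the construction to go through.

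The main obstacle, and the step requiring the most care, is the \emph{compactness analysis}: showing that the symmetric Palais–Smale sequences at the levels $c_j$ do not escape to infinity or split, given that the group action has only finite orbits so the usual concentration-compactness dichotomy is not automatically ruled out. The resolution is that the relevant threshold for compactness in the $G$-invariant class is governed by $\big(\min_{x\neq 0}\#Gx\big)\,c_\infty$ rather than $c_\infty$ itself — a bubble escaping to infinity must drag its entire $G$-orbit with it, costing at least $\min_{x\neq0}\#Gx$ times the limit ground state energy — so as long as $c_j<\big(\min_{x\neq0}\#Gx\big)c_\infty$, which the inequality $c_j\le\ell_jc_\infty\le\ell_mc_\infty<\big(\min_{x\neq0}\#Gx\big)c_\infty$ ensures, the symmetric $(PS)_{c_j}$ condition holds and the minimax critical points exist. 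Verifying this $G$-splitting lemma (a global compactness / Struwe-type decomposition adapted to the symmetry and to the double-power nonlinearity with zero mass) is the technical core; everything else is a careful but routine assembly of test functions, genus estimates, and the sign-change argument $c_j>c_1\Rightarrow u_j$ nodal.
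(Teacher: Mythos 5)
There is a genuine gap, and it lies exactly where the theorem gets its strength: the construction of the levels and of the numbers $\ell_m$. Your test functions are $G$-symmetrizations of translated copies of the limit ground state placed on a large orbit $G\xi$ with $|\xi|$ large. Such a function has energy approximately $(\#G\xi)\,c_\infty\geq\big(\min_{x\neq 0}\#Gx\big)c_\infty$, i.e.\ \emph{at or above} the compactness threshold of Theorem \ref{thm:compactness}, never strictly below it. So the chain of inequalities you invoke at the end, $c_j\le\ell_j c_\infty<\big(\min_{x\neq0}\#Gx\big)c_\infty$, cannot be produced by this construction; moreover the resulting upper bounds depend on $G$ (through the orbit cardinalities), whereas the theorem requires $\ell_m$ to depend only on $R$. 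This is precisely why the paper does something different: since $V$ is radial, for each family of $m$ pairwise disjoint annuli $\Theta_1,\dots,\Theta_m\subset\rn\smallsetminus B_R$ one takes the least-energy $O(N)$-invariant solutions $\omega_{\Theta_i}\in D^{1,2}_0(\Theta_i)^{O(N)}$ (available by Theorem \ref{thm:main1} on bounded domains); these are radial, hence $G$-invariant for \emph{every} closed $G\le O(N)$, and their energies define $c_m:=\inf\sum_i I_V(\omega_{\Theta_i})$ and $\ell_m:=c_m/c_\infty$, which depend only on $R$ (and on $V,f$). On the $m$-dimensional space $W_m$ spanned by the $\omega_{\Theta_i}$ (the supports are disjoint) one has $\sup_{W_m}I_V=\sum_iI_V(\omega_{\Theta_i})$, which under the hypothesis $\min_{x\neq0}\#Gx>\ell_m$ is strictly below $\big(\min_{x\neq0}\#Gx\big)c_\infty$, so Theorem \ref{thm:compactness} gives $(PS)_c$ up to that level and Theorem \ref{thm:nodal principle} applies; a final limiting argument ($\eps\to0$, using $(PS)_{c_m}$) sharpens the bound to $I_V(u_j)\le\ell_jc_\infty$.

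A second, smaller gap: the Krasnoselskii-genus minimax on the Nehari manifold does not by itself yield \emph{sign-changing} critical points; a critical point at a level $c_j>c_1$ may well be a higher-energy positive bound state. The nodality in the paper comes from a dedicated device, namely the flow-invariant neighborhoods $B_\alpha(\cP^G)$, $B_\alpha(-\cP^G)$ of the cones (Lemma \ref{lem:equator}, Corollary \ref{cor:retraction}) together with the relative genus of Definition \ref{def:genus} and Theorems \ref{thm:genus}--\ref{thm:nodal principle}; some argument of this kind (or minimization on the nodal Nehari set) is needed and cannot be dismissed as the routine implication ``$c_j>c_1\Rightarrow u_j$ nodal''. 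Your identification of the compactness threshold $\big(\min_{x\neq0}\#Gx\big)c_\infty$ and of the $G$-equivariant splitting lemma as the analytic core is correct and agrees with the paper, but without a $G$-independent construction of low-energy $G$-invariant test functions the proof of Theorem \ref{thm:main2} does not go through.
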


\begin{examples}
The following two examples show that, if $N\geq 4$, there are many closed subgroups $G$ of $O(N)$ satisfying $\min_{x\in\rn\smallsetminus\{0\}}\# Gx > \ell$ for any given $\ell$.
\begin{itemize}
\item[$(a)$]If $N$ is even then, for any given $n\in\n$, the cyclic group $\mathbb{Z}_n:=\{\mathrm{e}^{2\pi\mathrm{i}j/n}:j=0,\ldots,n-1\},$ acting on $\rn$ as
$$\mathrm{e}^{2\pi\mathrm{i}j/n}(z_1,\ldots,z_{N/2})=(\mathrm{e}^{2\pi\mathrm{i}j/n}z_1,\ldots,\mathrm{e}^{2\pi\mathrm{i}j/n}z_{N/2}),\qquad(z_1,\ldots,z_{N/2})\in\cc^{N/2}\equiv\rn,$$
has the property that $\#\mathbb{Z}_nx=n$ for every $x\in\rn\smallsetminus\{0\}$.
\item[$(b)$]If $N\geq 4$ then, for any given $n\in\n$, there are closed subgroups $G$ of $O(N)$ such that
$$\min_{x\in\rn\smallsetminus\{0\}}\# Gx = n.$$
For instance, the group $G_n:=\mathbb{Z}_n\times O(N-2)$ acting on $\rn$ as
\begin{align*}
(\mathrm{e}^{2\pi\mathrm{i}j/n},g)(z,y):=(\mathrm{e}^{2\pi\mathrm{i}j/n}z,gy)\qquad\text{for all \ }(z,y)\in \cc\times\r^{N-2}\equiv\rn, \ g\in O(N-2),
\end{align*}
has this property.
\end{itemize}
In contrast, $O(3)$ does not have subgroups with  finite orbits of arbitrarily large cardinality. The complete list of subgroups of $O(3)$ may be found in \emph{\cite[Section 8.2]{b}}.
\end{examples}
These examples, together with Theorem \ref{thm:main2}, show that for $N\geq 4$ there are many exterior domains that admit only symmetries having finite orbits in which the problem \eqref{eq:problem} has a prescribed number of solutions; see Figure \ref{fig}.
\begin{figure}[htbp]
        \centering
        \includegraphics[width=0.30\textwidth]{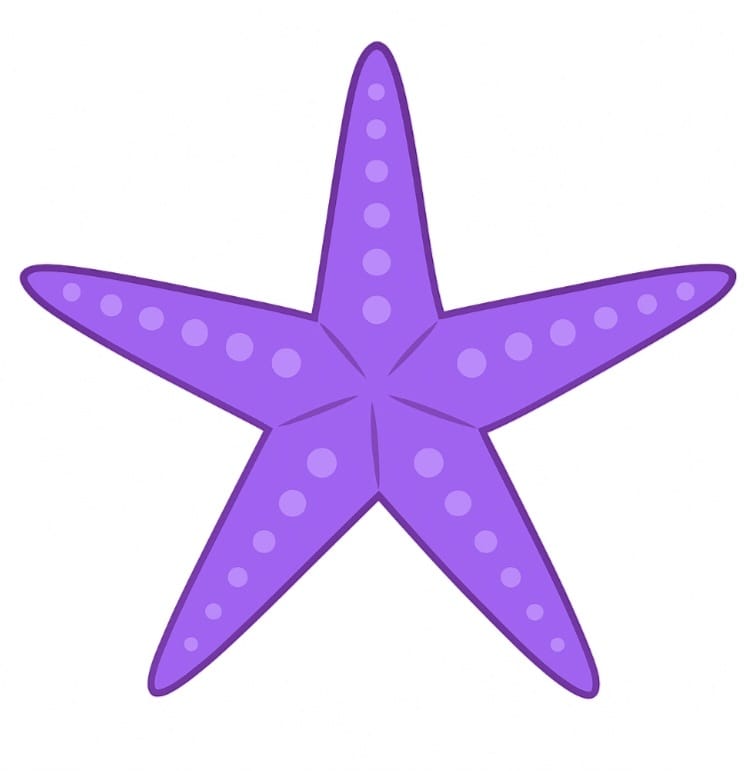}
        \qquad\qquad\qquad
        \includegraphics[width=0.30\textwidth]{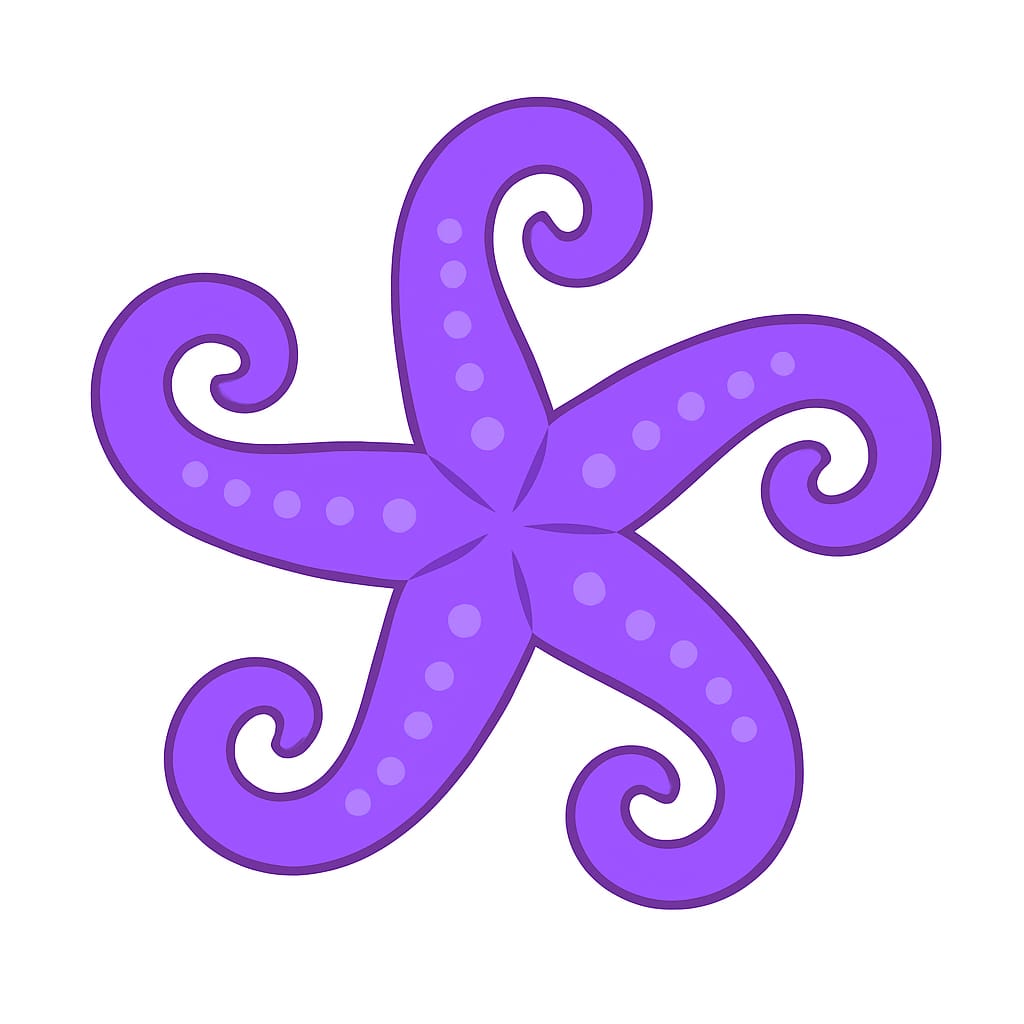}
        \caption{Projections onto $\cc$ of two bounded $[\mathbb{Z}_5\times O(N-2)]$-invariant subsets of $\cc\times\r^{N-2}$, $N\geq 4$. If $\ell_m<5$, the problem \eqref{eq:problem} has a positive and $m-1$ sign-changing solutions in the complement of each of these sets.}
        \label{fig}
  \end{figure}

Unlike the groups considered in \cite{bw,m1,m2}, we do not require that $G$ be provided with an involution. The solutions we obtain do not change sign by construction, but are given by a symmetric mountain pass theorem for nodal solutions; see Theorem \ref{thm:nodal principle}. This theorem is obtained by analyzing the negative gradient flow of the energy functional and uses a topological invariant that measures only those changes in the topology of sublevel sets that are produced by solutions that change sign. It is inspired by results in \cite{cp,cw}.

The Palais-Smale condition only holds up to a certain level, determined by the minimum cardinality of a $G$-orbit in $\o$; see Theorem \ref{thm:compactness}. To establish this level we use a concentration compactness argument whose main ingredient is a version of Lions' vanishing lemma proved in \cite{cm}.

Theorem \ref{thm:main1} follows immediately from Theorems \ref{thm:compactness} and \ref{thm:nodal principle}. The proof of Theorem \ref{thm:main2} requires additional work. The numbers $\ell_m$ are explicitly defined in terms of the infimum of all sums of the ground state energies in $m$ disjoint annuli contained in $\rn\smallsetminus B_R$; see \eqref{eq:ell}. This allows these numbers to be estimated. To prove Theorem \ref{thm:main2}, we introduce an $m$-dimensional linear subspace $W_m$ of the space of radial functions in $D^{1,2}_0(\rn\smallsetminus B_R)$ on which the maximum $d_m$ of the energy is less than $\ell_mc_\infty$. Thus, under the assumptions of Theorem \ref{thm:main2}, Theorem \ref{thm:compactness} guarantees that the Palais-Smale condition holds below $d_m$ and Theorem \ref{thm:nodal principle} yields the existence of one positive and $m-1$ sign-changing solutions.

It is worth noting that, for $N\geq 5$, an upper bound for the lowest possible energy of a sign-changing solution to \eqref{eq:problem} in $\rn$ with $V=0$ was recently given in \cite{cmp2}.

This paper is organized as follows. In Section 2 we study the symmetric variational problem and establish a level below which the Palais-Smale condition holds. In Section 3 we derive a symmetric mountain-pass theorem for nodal solutions. Section 4 is devoted to the proofs of Theorems \ref{thm:main1} and \ref{thm:main2}.

\section{A compactness criterion}

Let $G$ be a closed subgroup of $O(N)$. Throughout this section we assume that $\o$ and $V$ are $G$-invariant, and that assumptions $(V_1)$, $(f_1)$ and $(f_2)$ are satisfied. 

For $u,v\in D^{1,2}_0(\o)$ set
$$\langle u,v\rangle_V:=\io(\nabla u\cdot\nabla v+Vuv)\qquad\text{and}\qquad\|u\|_V^2:=\io(|\nabla u|^2+Vu^2).$$
Assumption $(V_1)$ and the Sobolev inequality imply that these expresions are well defined and that $\|\cdot\|_V$ is equivalent to the standard norm \eqref{eq:usual norm} of $D^{1,2}_0(\o)$. It follows from $(f_1)$ that $|F(s)|\leq A_1|s|^{2^*}$ and $|f(s)|\leq A_1|s|^{2^*-1}$. Hence, the functional $I_V:D^{1,2}_0(\o)\to\r$ given by
$$I_V(u):=\frac{1}{2}\|u\|_V^2-\io F(u)$$
is well defined. As shown in \cite[Lemma 2.6]{bm} and \cite[Proposition 3.8]{bpr}, $I_V$ is of class $\cC^2$ and its derivative at $u$ is given by 
$$I_V'(u)v=\langle u,v\rangle_V - \io f(u)v,\qquad v\in D^{1,2}_0(\o).$$
The critical points of $I_V$ are the solutions to the problem \eqref{eq:problem}.

For $g\in G$ and $u\in D^{1,2}_0(\o)$ we define $gu\in D^{1,2}_0(\o)$ by $gu(x):=u(g^{-1}x)$. Since $\o$ and $V$ are $G$-invariant, $gu$ is well defined and $\langle gu,gv\rangle_V=\langle u,v\rangle_V$ for all $g\in G$ and $u,v\in D^{1,2}_0(\o)$. Therefore, the functional $I_V$ is $G$-invariant, i.e., $I_V(gu)=I_V(u)$, and, by the principle of symmetric criticality \cite[Theorem 1.28]{w}, the $G$-invariant solutions to \eqref{eq:problem} are the critical points of its restriction to the space
$$D^{1,2}_0(\o)^G:=\{u\in D^{1,2}_0(\o):gu=u\text{ for all }g\in G\},$$
which is the space of $G$-invariant functions in $D^{1,2}_0(\o)$. Abusing notation we write
$$I_V:D^{1,2}_0(\o)^G\to\r$$
for the restriction of $I_V$ to $D^{1,2}_0(\o)^G$.

Recall that a sequence $(u_k)$ in $D^{1,2}_0(\o)^G$ such that
$$I_V(u_k)\to c\qquad\text{and}\qquad I'_V(u_k)\to 0\text{ \ in \ }(D^{1,2}_0(\o)^G)'$$
is called a \emph{Palais-Smale sequence} for $I_V$ at the level $c$, and $I_V$ is said to satisfy $(PS)_c$ in $D^{1,2}_0(\o)^G$ if any such sequence contains a convergent subsequence.

\begin{lemma}\label{lem:ps bounded}
If $(u_k)$ is a Palais-Smale sequence for $I_V$ at the level $c$, then $(u_k)$ is bounded in $D^{1,2}_0(\o)^G$ and $c\geq 0$.
\end{lemma}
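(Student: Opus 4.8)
The plan is to use the standard Ambrosetti--Rabinowitz type argument, exploiting hypothesis $(f_2)$.

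First I would establish the boundedness of $(u_k)$. The key identity is to combine $I_V(u_k)\to c$ and $I_V'(u_k)u_k\to 0$ (the latter being a consequence of $I_V'(u_k)\to 0$ and the boundedness we are trying to prove — so this needs care; one uses $I_V'(u_k)u_k = o(\|u_k\|_V)$). Concretely, for $\theta>2$ as in $(f_2)$,
\begin{equation*}
I_V(u_k)-\frac{1}{\theta}I_V'(u_k)u_k = \Big(\frac12-\frac1\theta\Big)\|u_k\|_V^2 + \io\Big(\frac1\theta f(u_k)u_k - F(u_k)\Big).
\end{equation*}
By $(f_2)$ the integrand $\frac1\theta f(u_k)u_k - F(u_k)\geq 0$, so the left-hand side is $\geq (\frac12-\frac1\theta)\|u_k\|_V^2$. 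On the other hand the left-hand side is $c+o(1)+o(\|u_k\|_V)$, since $|I_V'(u_k)u_k|\leq \|I_V'(u_k)\|_{(D^{1,2}_0(\o)^G)'}\|u_k\|_V = o(\|u_k\|_V)$. Because $\frac12-\frac1\theta>0$, this yields $\|u_k\|_V^2 \leq C(1+\|u_k\|_V)$, which forces $(u_k)$ to be bounded. Here I am using that $\|\cdot\|_V$ is equivalent to the standard norm, as already noted in the excerpt, so boundedness in one norm is boundedness in $D^{1,2}_0(\o)^G$.

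Next I would prove $c\geq 0$. Once we know $(u_k)$ is bounded, $I_V'(u_k)u_k\to 0$ genuinely, so from the displayed identity above (now without the $o(\|u_k\|_V)$ concern) we get
\begin{equation*}
\Big(\frac12-\frac1\theta\Big)\|u_k\|_V^2 + \io\Big(\frac1\theta f(u_k)u_k - F(u_k)\Big) = c + o(1),
\end{equation*}
and both terms on the left are nonnegative, hence $c\geq \liminf \big[(\frac12-\frac1\theta)\|u_k\|_V^2\big] \geq 0$. (Even more simply: $c + o(1) = I_V(u_k) - \frac1\theta I_V'(u_k)u_k \geq 0$.)

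I do not expect any serious obstacle here; this is a routine argument. The only point requiring a little attention is the logical order: one must extract boundedness from the coercivity-type estimate before one is entitled to write $I_V'(u_k)u_k = o(1)$ rather than $o(\|u_k\|_V)$, but the quadratic-versus-linear growth in $\|u_k\|_V$ makes this self-contained. One should also double-check that $(f_2)$ indeed gives $\frac1\theta f(s)s - F(s)\geq 0$: from $\theta F(s)\leq f(s)s$ we get $F(s)\leq \frac1\theta f(s)s$, which is exactly what is needed.
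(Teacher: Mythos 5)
Your argument is correct and is essentially the paper's own proof: the same identity $I_V(u_k)-\tfrac1\theta I_V'(u_k)u_k=\bigl(\tfrac12-\tfrac1\theta\bigr)\|u_k\|_V^2+\io\bigl(\tfrac1\theta f(u_k)u_k-F(u_k)\bigr)$, the same use of $(f_2)$ for nonnegativity of the integrand, and the same bound $\leq |I_V(u_k)|+o(1)\|u_k\|_V$ giving boundedness and then $c\geq 0$. Your extra care about the order of the two conclusions (boundedness first, then $I_V'(u_k)u_k=o(1)$) is exactly the point implicit in the paper's one-line deduction.
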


\begin{proof}
By assumption $(f_2)$,
\begin{align}\label{eq:bounded}
0&\leq\Big(\frac{1}{2}-\frac{1}{\theta}\Big)\|u_k\|^2_V\leq \Big(\frac{1}{2}-\frac{1}{\theta}\Big)\|u_k\|^2_V+\io \Big(\frac{1}{\theta}f(u_k)u_k -F(u_k)\Big) \nonumber \\
&=I_V(u_k)-\frac{1}{\theta}I'_V(u_k)u_k\leq |I_V(u_k)|+o(1)\|u_k\|_V.
\end{align}
Therefore, $(u_k)$ is bounded in $D^{1,2}_0(\o)^G$ and $c\geq 0$.
\end{proof}

Our aim is to prove the following result.

\begin{theorem} \label{thm:compactness}
\begin{itemize}
\item[$(a)$] If $\o$ is bounded, the functional $I_V$ satisfies $(PS)_c$ in $D^{1,2}_0(\o)^G$ for every $c\in\r$.
\item[$(b)$] If $\o$ is an exterior domain, then the functional $I_V$ satisfies $(PS)_c$ in $D^{1,2}_0(\o)^G$ for every 
$$c<\Big(\min_{x\in\rn\smallsetminus\{0\}}\#Gx\Big)c_\infty,$$
where $c_\infty$ is the ground state energy of the limit problem \eqref{eq:limit problem}. 
In particular, if $\#Gx=\infty$ for every $x\in\rn\smallsetminus\{0\}$, then $I_V$ satisfies $(PS)_c$ in $D^{1,2}_0(\o)^G$ for every $c\in\r$.
\end{itemize}
\end{theorem}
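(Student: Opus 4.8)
The plan is to analyze a Palais–Smale sequence $(u_k)$ at level $c$ using a concentration-compactness argument. By Lemma 2.4 we already know $(u_k)$ is bounded in $D^{1,2}_0(\o)^G$ and $c\ge 0$, so after passing to a subsequence $u_k\rh u$ weakly in $D^{1,2}_0(\o)^G$, $u_k\to u$ in $L^s_{\mathrm{loc}}$ for $s<2^*$, and $u_k\to u$ a.e. A standard argument (using $(f_1)$ and the fact that $I_V'$ is continuous together with the Brezis–Lieb/Vitali type splitting) shows that $u$ is a critical point of $I_V$ and that $v_k:=u_k-u$ is again a Palais–Smale sequence, now at level $c-I_V(u)$, with $v_k\rh 0$. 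Since $u$ is a $G$-invariant solution, $I_V(u)\ge 0$ by Lemma 2.4 (applied to the stationary sequence $u$, or directly from $(f_2)$), so it suffices to prove that $v_k\to 0$ strongly; equivalently, that a nonzero weak limit cannot occur at too low a level.

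The heart of the matter is therefore the \emph{vanishing-versus-nonvanishing dichotomy} for $(v_k)$. First, because $V\in L^{N/2}\cap L^r$ with $r>N/2$, the potential term is a compact perturbation: $\io V v_k^2\to 0$, so $\|v_k\|_V^2=\|v_k\|^2+o(1)$ and $\io f(v_k)v_k=\io|\nabla v_k|^2+o(1)$. If $(v_k)$ \emph{vanishes}, i.e.
$$\sup_{y\in\rn}\int_{B_1(y)}|v_k|^2\to 0,$$
then by the version of Lions' vanishing lemma from \cite{cm} (valid in the zero-mass, double-power setting because $2<p<2^*<q$), one gets $\io F(v_k)\to0$ and $\io f(v_k)v_k\to0$, hence $\|v_k\|\to0$ and we are done: $c=I_V(u)$, a critical value, contradicting nothing, so in fact the sequence converges. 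In case $(a)$, $\o$ bounded, vanishing is the \emph{only} possibility: the embedding $D^{1,2}_0(\o)\hookrightarrow L^2(\o)$ is compact (Poincaré), so $v_k\to 0$ in $L^2_{\mathrm{loc}}$ is automatic and the supremum above tends to $0$ once one rules out mass escaping to infinity — but $\o$ bounded means there is no infinity — so $(PS)_c$ holds for all $c\in\r$, proving $(a)$.

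For case $(b)$, $\o$ exterior, if $(v_k)$ does \emph{not} vanish then there exist $\delta>0$ and points $y_k\in\rn$ with $\int_{B_1(y_k)}|v_k|^2\ge\delta$. Since $v_k\rh0$ and the $L^2$-mass is locally absorbed, $|y_k|\to\infty$. The translated sequence $w_k:=v_k(\cdot+y_k)$ is bounded, converges weakly to some $w\neq0$, and — using that $V$ decays and $\o$ is exterior so that eventually $B_1(y_k)\subset\o$ — $w$ solves the limit problem \eqref{eq:limit problem}; a Brezis–Lieb iteration then shows $I_\infty(w)\ge c_\infty$ is subtracted from the level. Now the symmetry enters: $(v_k)$ is $G$-invariant, so the whole $G$-orbit $Gy_k$ carries a copy of this profile, and for $g y_k$, $g'y_k$ in the same orbit with $|gy_k-g'y_k|\to\infty$ the corresponding bumps are asymptotically orthogonal in $D^{1,2}$. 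Hence at least $\#Gy_k\ge \min_{x\neq0}\#Gx$ mutually orthogonal copies of a limit-problem solution split off, forcing
$$c=I_V(u)+\lim_k I_V(v_k)\ \ge\ 0+\Big(\min_{x\in\rn\smallsetminus\{0\}}\#Gx\Big)c_\infty,$$
contradicting the hypothesis $c<(\min_{x\neq0}\#Gx)\,c_\infty$. Therefore $(v_k)$ must vanish, and by the previous paragraph $u_k\to u$ strongly. The final claim, that $\#Gx=\infty$ for all $x\neq0$ forces $(PS)_c$ at every level, is then immediate since the bound becomes vacuous.

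The main obstacle I expect is making the orbit-counting step rigorous: one must show that when the concentration points $y_k$ lie on a $G$-orbit of cardinality $\kappa$, the $\kappa$ translated profiles are genuinely asymptotically decoupled — i.e. the mutual interaction energies $\io|\nabla(gw_k)\cdot\nabla(g'w_k)|$ and $\io|F(gw_k+g'w_k)-F(gw_k)-F(g'w_k)|$ vanish — which requires controlling the minimal distance between distinct points of $Gy_k$ as $|y_k|\to\infty$ (this minimal distance itself tends to infinity because $G\subset O(N)$ acts by isometries and the orbit has bounded cardinality, so distinct orbit points stay a fixed angular distance apart, hence a distance $\sim|y_k|\to\infty$). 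One also has to treat the possibility of \emph{several} distinct concentration profiles (a full profile decomposition), but since every profile contributes at least $(\min_{x\neq0}\#Gx)c_\infty$ and $c<(\min_{x\neq0}\#Gx)c_\infty$, even one profile already yields the contradiction, so the single-bump analysis suffices.
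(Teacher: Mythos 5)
Your overall strategy (weak limit, Brezis--Lieb splitting, Lions' vanishing lemma, translation to a profile of the limit problem, then multiplication of the profile along the $G$-orbit) is the same as the paper's, and your treatment of the bounded case via the compact embedding $D^{1,2}_0(\o)\hookrightarrow L^2(\o)$ is a correct (and slightly more direct) alternative to the paper's argument. However, the step you yourself flag as the main obstacle contains a genuine gap: the claim that, because the orbit has bounded cardinality and $G$ acts by isometries, ``distinct orbit points stay a fixed angular distance apart, hence a distance $\sim|y_k|\to\infty$'' is false. The angular separation of distinct points of $Gy_k$ is not uniformly bounded below as $y_k$ varies: it degenerates when $y_k$ approaches the fixed-point set of a group element. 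For example, for $G=\mathbb{Z}_n\times O(N-2)$ acting on $\cc\times\r^{N-2}$ and $y_k=(k,\xi)$ with $|\xi|=1$ fixed, the $O(N-2)$-part of the orbit of $y_k$ stays inside a ball of radius $2$ for all $k$; similarly, for a reflection the two points of the orbit of $(1,k,0,\dots,0)$ remain at distance $2$ while $|y_k|\to\infty$. So the bumps attached to distinct orbit points need not decouple, and you cannot split off $\#Gy_k$ copies of the profile as written; indeed $\#Gy_k$ may even be infinite while only finitely many orbit points separate.

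What is needed here is precisely the content of the paper's Lemma \ref{lem:K} (quoted from \cite[Lemma 3.2]{ccs}): one replaces the concentration points $y_k$ by nearby points $\zeta_k$ (with $\dist(Gy_k,\zeta_k)$ bounded, so the profile still concentrates near $\zeta_k$ by $G$-invariance of $v_k$) whose isotropy group is a \emph{fixed} subgroup $K$, and for which points in distinct cosets $gK\neq\what gK$ do satisfy $|g\zeta_k-\what g\zeta_k|\to\infty$. If $[G:K]=\infty$ one can separate arbitrarily many copies, which contradicts the finiteness of the level via the iterated splitting; hence $[G:K]<\infty$ and the splitting yields exactly $[G:K]=\#G\zeta_k$ decoupled copies, giving $c\geq \#G\zeta_k\, I_\infty(w)\geq \big(\min_{x\in\rn\smallsetminus\{0\}}\#Gx\big)c_\infty$ because $|\zeta_k|\to\infty$ forces $\zeta_k\neq 0$. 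Note that the correct count is the orbit size of the \emph{auxiliary} point $\zeta_k$, not of $y_k$; without this device (or an equivalent argument handling concentration near singular orbits) your lower bound on the level is not justified, and this is the essential missing ingredient in your proposal.
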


First, we state some lemmas that are needed for the proof.

\begin{lemma}\label{lem:cm} 
If $u_k\rh u$ weakly in $D^{1,2}(\rn)$ then, after passing to a subsequence, the following statements hold true:
\begin{itemize}
\item[$(a)$] $\|u_k\|_V^2 = \|u_k -u\|^2 + \|u\|_V^2 + o(1).$
\item[$(b)$] $\irn |f(u_k) - f(u)| |\vp|=o(1)$ \ for every $\vp\in\cC^{\infty}_{c}(\rn)$.
\item[$(c)$] $\irn F(u_k) = \irn F(u_k - u) + \irn F(u) + o(1)$.
\item[$(d)$] $\irn f(u_k)u_k = \irn f(u_k - u)(u_k-u) + \irn f(u)u + o(1)$.
\item[$(e)$] $Vu_k\to Vu$ \ and \ $f(u_k) - f(u_k -u) \to f(u)$  strongly in $(D^{1,2}_0(\o))'$.
\end{itemize}
\end{lemma}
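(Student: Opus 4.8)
The plan is to treat this as a collection of standard Brezis–Lieb–type splitting statements, each of which follows from the weak convergence $u_k\rh u$ together with the growth bounds on $f,F$ coming from $(f_1)$. First I would recall that, after passing to a subsequence, $u_k\to u$ strongly in $L^s_{\mathrm{loc}}(\rn)$ for all $s<2^*$, $u_k\to u$ a.e. in $\rn$, and $(u_k)$ is bounded in $D^{1,2}(\rn)$, hence in $L^{2^*}(\rn)$. For part $(a)$, expand $\|u_k\|_V^2=\|u_k-u\|_V^2+2\langle u_k-u,u\rangle_V+\|u\|_V^2$; the middle term tends to $0$ by weak convergence in $D^{1,2}_0(\o)$ together with the fact that $v\mapsto\langle v,u\rangle_V$ is a bounded linear functional (using $(V_1)$ and Sobolev). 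It then remains to replace $\|u_k-u\|_V^2$ by $\|u_k-u\|^2$ up to $o(1)$, i.e.\ to show $\io V(u_k-u)^2=o(1)$: writing $w_k:=u_k-u\rh 0$, split the integral over $B_\rho$ and $\rn\setminus B_\rho$; on $B_\rho$ use $L^2_{\mathrm{loc}}$-convergence $w_k\to 0$ and $V\in L^{N/2}$, and on the complement use Hölder with exponents $N/2$ and $2^*/2$ together with the smallness of $\int_{\rn\setminus B_\rho}|V|^{N/2}$ for $\rho$ large and the uniform bound on $\|w_k\|_{2^*}$.

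For $(b)$, fix $\vp\in\cC^\infty_c(\rn)$ with support $K$; since $f$ is continuous and $u_k\to u$ a.e.\ on $K$, we have $f(u_k)\to f(u)$ a.e.\ on $K$, and by $(f_1)$ the sequence $|f(u_k)|$ is bounded in $L^{2^*/(2^*-1)}(K)$ (using $|f(s)|\le A_1|s|^{2^*-1}$ and the $L^{2^*}$-bound on $u_k$ plus $|K|<\infty$), so it is uniformly integrable on $K$; Vitali's convergence theorem then gives $f(u_k)\to f(u)$ in $L^1(K)$, which implies the claim. Parts $(c)$ and $(d)$ are Brezis–Lieb lemmas for the nonlinearities $F$ and $s\mapsto f(s)s$ respectively: both satisfy a growth bound of the form $|\Phi(s)|\le A_1|s|^{2^*}$, and $\Phi\in\cC^1$ with $|\Phi'(s)|\le C|s|^{2^*-1}$ by $(f_1)$, so the standard Brezis–Lieb argument applies — one writes $\Phi(u_k)-\Phi(u_k-u)-\Phi(u)$, uses the a.e.\ convergence and the mean value theorem to dominate this difference, for each $\eps>0$, by $\eps|u_k-u|^{2^*}+C_\eps g(u)$ for a fixed $L^1$ function $g$, and concludes by Fatou/uniform integrability that the $L^1$-norm of the difference tends to $0$.

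Finally, for $(e)$: to show $Vu_k\to Vu$ in $(D^{1,2}_0(\o))'$, estimate for $v\in D^{1,2}_0(\o)$ the pairing $\io V(u_k-u)v$ by Hölder, bounding $\|V(u_k-u)\|_{(2^*)'}$ via the same localization-and-tail splitting as in $(a)$ (now with $w_k=u_k-u\rh 0$ in $L^{2^*}$ and strongly in $L^{2^*}_{\mathrm{loc}}$), which gives $o(1)\|v\|$ uniformly in $v$. For the convergence $f(u_k)-f(u_k-u)\to f(u)$ in $(D^{1,2}_0(\o))'$, set $h_k:=f(u_k)-f(u_k-u)-f(u)$; by $(f_1)$ and convexity-type elementary inequalities, $|f(a+b)-f(b)|\le C(|a|^{2^*-1}+|b|^{2^*-2}|a|)$, so $|h_k|\le C(|u|^{2^*-1}+|u_k-u|^{2^*-2}|u|)$, and since $u_k-u\to 0$ a.e.\ we also get $h_k\to 0$ a.e.; pairing against $v\in D^{1,2}_0(\o)$ and using Hölder with the triple $(2^*,2^*,N/2$-type exponents as appropriate), the dominating function is fixed in $L^{(2^*)'}$ while $h_k\to 0$ a.e., so a dominated-convergence argument combined with the splitting of $\rn$ into a large ball and its complement yields $\|h_k\|_{(2^*)'}=o(1)$, hence $\langle h_k,\cdot\rangle=o(1)\|\cdot\|$. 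Then we appeal to the equivalence of $\|\cdot\|_V$ with $\|\cdot\|$ and to $(V_1)$ to interpret these as statements in $(D^{1,2}_0(\o))'$.

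I expect the only genuinely delicate points to be the uniform (in $v$, and in $k$) tail estimates in $(a)$ and $(e)$ that exploit $V\in L^{N/2}\cap L^r$ — in particular getting the tail contribution of $V$ small independently of $k$ — and keeping track that the Brezis–Lieb splittings in $(c)$ and $(d)$ only use the critical growth bound $|s|^{2^*}$, which is exactly what $(f_1)$ provides after interpolating the two regimes $|s|\le 1$ and $|s|\ge 1$; everything else is routine.
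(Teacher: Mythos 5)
Your sketches of $(a)$--$(d)$ are essentially correct and in fact more self-contained than the paper, which simply cites \cite[Lemma 3.8]{cm} for $(a)$, $(b)$, $(c)$ and for the $f$-part of $(e)$, obtains $(d)$ by repeating the argument of $(c)$ with $F(s)$ replaced by $f(s)s$, and only writes out the $V$-part of $(e)$. Two exponent slips in your handling of the potential are worth flagging, though both are repairable: on a fixed ball you cannot couple $V\in L^{N/2}$ with $L^2_{\mathrm{loc}}$-convergence of $u_k-u$ by a direct H\"older estimate (the conjugate exponent of $N/2$ forces the $L^{2^*}$-norm of $u_k-u$, which does not converge), and your later assertion that $u_k-u\to 0$ ``strongly in $L^{2^*}_{\mathrm{loc}}$'' is false. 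The paper's remedy is exactly the second integrability hypothesis in $(V_1)$: with $V\in L^r$, $r>N/2$, one uses the exponent $\eta$ given by $\frac1r+\frac1\eta+\frac1{2^*}=1$, so $\eta<2^*$ and Rellich applies on $B_\rho$, while the tail is controlled by the smallness of $\int_{\rn\smallsetminus B_\rho}|V|^{N/2}$; alternatively one may truncate $V$.

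The genuine gap is in the second half of $(e)$. You dominate $h_k:=f(u_k)-f(u_k-u)-f(u)$ by $C\big(|u|^{2^*-1}+|u_k-u|^{2^*-2}|u|\big)$ and then claim ``the dominating function is fixed in $L^{(2^*)'}$'', concluding $\|h_k\|_{(2^*)'}=o(1)$ by dominated convergence plus a ball/tail splitting. But the dominating function is not fixed: the term $|u_k-u|^{2^*-2}|u|$ depends on $k$, and its $L^{(2^*)'}$-norm is only bounded (H\"older gives $\|u_k-u\|_{2^*}^{2^*-2}\|u\|_{2^*}$), not small. Nor can the splitting of $\rn$ into a large ball and its complement close the argument by itself: on a fixed ball $B_\rho$, any three-fold H\"older estimate of $\int_{B_\rho}|u_k-u|^{2^*-2}|u|\,|v|$ with $u$ and $v$ known only in $L^{2^*}$ forces $|u_k-u|^{2^*-2}$ to be measured in $L^\alpha(B_\rho)$ with $\alpha\geq N/2$, i.e.\ $u_k-u$ in $L^s(B_\rho)$ with $s\geq 2^*$, precisely where weak convergence gives no strong convergence. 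The missing idea (supplied in the cited proof) is to approximate the fixed function $u$ in $L^{2^*}$ by a bounded, compactly supported $\vp$: the error term is uniformly small in $k$ because $\||u_k-u|^{2^*-2}\|_{2^*/(2^*-2)}$ is bounded and $\|u-\vp\|_{2^*}<\eps$, while on $\supp\vp$ one may place $|u_k-u|^{2^*-2}$ in $L^{(2^*)'}$, i.e.\ $u_k-u$ in $L^{(2^*-2)(2^*)'}$ with $(2^*-2)(2^*)'<2^*$, where Rellich does give strong convergence. Without such an extra step, the dominated-convergence argument you propose does not go through.
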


\begin{proof}
Statements $(a),(b)$ and $(c)$ are proved in \cite[Lemma 3.8]{cm}. The proof of $(d)$ is obtained by replacing $F(s)$ by $f(s)s$ in the proof of $(c)$. Next, we prove $(e)$.

Let $\eps>0$. By assumption, $V\in L^{N/2}(\rn)\cap L^r(\rn)$ with $r>N/2$. We fix $\rho>0$ such that 
$$\int_{\rn\smallsetminus B_\rho}|V|^{N/2}<\eps^{N/2},$$ 
and define $\eta$ by $\frac{1}{r}+\frac{1}{\eta}+\frac{1}{2^*}=1$. Then, $\eta<2^*$ and, after passing to a subsequence, there exists $k_0\in\n$ such that 
$$\int_{B_\rho}|u_k-u|^\eta<\eps^\eta\qquad\text{for all \ }k\geq k_0.$$
As a consequence,
\begin{align*}
\Big|\irn V(u_k-u)v\Big|&\leq \int_{B_\rho}|V(u_k-u)v|+\int_{\rn\smallsetminus B_\rho}|V(u_k-u)v|\\
&\leq |V|_{L^r(\rn)}|u_k-u|_{L^\eta(B_\rho)}|v|_{L^{2^*}(\rn)} + |V|_{L^{N/2}(\rn\smallsetminus B_\rho)}|u_k-u|_{L^{2^*}(\rn)}|v|_{L^{2^*}(\rn)}\\
&\leq C\eps\|v\|\qquad\text{for every \ }v\in D^{1,2}(\rn)\text{ \ and \ }k\geq k_0.
\end{align*}
This shows that $V(u_k-u)\to 0$ \ in $(D^{1,2}_0(\o))'$. The proof that \ $f(u_k) - f(u_k -u) \to f(u)$ \ in $(D^{1,2}_0(\o))'$ is given in \cite[Lemma 3.8]{cm}. 
\end{proof}

The following version of Lions' vanishing lemma will play a crucial role.

\begin{lemma}\label{lem:lions} 
If $(u_k)$ is bounded in $D^{1,2}(\rn)$ and there exists $R>0$ such that 
$$\lim_{k\to\infty} \left( \sup_{y \in \rn} \int_{B_R(y)} |u_k|^2 \right)=0,$$
then $\lim_{k\to\infty} \irn f(u_k)u_k=0.$
\end{lemma}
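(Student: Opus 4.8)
The plan is to split the integral $\irn f(u_k)u_k$ according to the two regimes in assumption $(f_1)$: the region $\{|u_k|\le 1\}$, where $|f(u_k)u_k|\le A_1|u_k|^q$ with $q>2^*$, and the region $\{|u_k|\ge 1\}$, where $|f(u_k)u_k|\le A_1|u_k|^p$ with $2<p<2^*$. On the first region I would bound $|u_k|^q\le |u_k|^{2^*}$ pointwise (since $|u_k|\le 1$ and $q>2^*$), so that $\int_{\{|u_k|\le 1\}}|f(u_k)u_k|\le A_1\irn|u_k|^{2^*}$; this is bounded but not obviously small, so I would instead interpolate to extract smallness from the vanishing hypothesis. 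On the second region, $|u_k|^p\le |u_k|^{2^*}$ again fails since $p<2^*$ forces the opposite inequality only when $|u_k|\le 1$; but there $|u_k|\ge 1$, so in fact $|u_k|^p\le|u_k|^{2^*}$ still holds pointwise when $|u_k|\ge 1$ because $p<2^*$. Wait — more carefully: for $|u_k|\ge 1$ and $p<2^*$ one has $|u_k|^p\le|u_k|^{2^*}$, and for $|u_k|\le 1$ and $q>2^*$ one has $|u_k|^q\le|u_k|^{2^*}$. Hence in all cases $|f(u_k)u_k|\le A_1|u_k|^{2^*}$, which is uniformly integrable but does not yet vanish.

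To upgrade this to $o(1)$, I would use the classical Lions argument in the $L^{2^*}$ norm. Since $(u_k)$ is bounded in $D^{1,2}(\rn)$, the Gagliardo–Nirenberg–Sobolev inequality on balls gives, for any $s$ with $2<s<2^*$, a bound of the form
$$\int_{B_R(y)}|u_k|^{2^*}\le C\Big(\int_{B_R(y)}|u_k|^{2}\Big)^{\alpha}\Big(\int_{B_R(y)}(|u_k|^{2}+|\nabla u_k|^{2})\Big)^{\beta}$$
for suitable $\alpha>0$, $\beta\ge 0$ with $\alpha+\beta$ matching the scaling of $2^*$; actually the cleanest route is: cover $\rn$ by balls $B_R(y_i)$ of bounded overlap, apply on each ball the interpolation $|u_k|_{L^{2^*}(B_R(y_i))}\le |u_k|_{L^2(B_R(y_i))}^{1-\lambda}|u_k|_{L^{2^{**}}(B_R(y_i))}^{\lambda}$ where $2^{**}$ is a higher exponent controlled by $\|u_k\|$ through the local Sobolev embedding (in dimension $N\ge 3$ one can take $2^{**}=2^*$ of the next dimension or simply use that $D^{1,2}$ on a ball embeds into every $L^s$ with $s\le 2^*$ — so I would instead interpolate between $L^2$ and $L^{2^*+\delta}$ is not available; the standard fix is to interpolate the $L^{2^*}$ ball-norm between $L^2$ and the full $L^{2^*}$ using Hölder is circular). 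The honest route, which I would follow, is: raise to the power $2^*/2$ and sum — $\irn|u_k|^{2^*}=\sum_i\int_{B_R(y_i)}|u_k|^{2^*}\le\big(\sup_i\int_{B_R(y_i)}|u_k|^{2^*-2}\cdot|u_k|^{2}\big)$ — no. Let me state the clean version I will actually write: by Sobolev on each ball, $\int_{B_R(y)}|u_k|^{2^*}\le C\|u_k\|_{H^1(B_R(y))}^{2^*}$, and by Gagliardo–Nirenberg $\|u_k\|_{L^{2^*}(B_R(y))}\le C\|u_k\|_{L^{2}(B_R(y))}^{1-\theta_0}\|u_k\|_{H^{1}(B_R(y))}^{\theta_0}$ with $\theta_0=\frac{N}{N+2}<1$; raising to the $2^*$ power, summing over a bounded-overlap cover, and using $\sum_i\|u_k\|_{H^1(B_R(y_i))}^2\le C\|u_k\|_{H^1(\rn)}^2$ together with $2^*\theta_0>2$, I get $\irn|u_k|^{2^*}\le C\big(\sup_y\int_{B_R(y)}|u_k|^2\big)^{(2^*(1-\theta_0))/?}\cdot(\text{bounded})$, hence $\irn|u_k|^{2^*}\to 0$.

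The main obstacle — and the only real subtlety — is making this interpolation/summation step rigorous: controlling the full $L^{2^*}(\rn)$ norm by a power of the vanishing quantity $\sup_y\int_{B_R(y)}|u_k|^2$ times a bounded quantity, which requires choosing the bounded-overlap cover correctly and checking that the exponent on the small factor is strictly positive (this is where $N\ge 3$, equivalently $2<2^*$, is used). Once $\irn|u_k|^{2^*}\to 0$ is established, the conclusion is immediate: by the pointwise bound derived above, $\big|\irn f(u_k)u_k\big|\le A_1\irn|u_k|^{2^*}\to 0$. I should double-check the degenerate endpoints of $(f_1)$ (the integrand is controlled across the set $\{|u_k|=1\}$ since both bounds agree there up to the constant $A_1$), but that is routine. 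Alternatively, if one prefers to avoid re-deriving Lions' lemma, one can cite the $L^2$-vanishing version to get $u_k\to 0$ in $L^s(\rn)$ for every $s\in(2,2^*)$ and then treat the two regimes separately: on $\{|u_k|\le 1\}$ bound $|u_k|^q\le|u_k|^{s}$ for any fixed $s\in(2,2^*)$ — no, $q>2^*>s$ gives $|u_k|^q\le|u_k|^s$ only when $|u_k|\le 1$, which is exactly our region, so $\int_{\{|u_k|\le1\}}|f(u_k)u_k|\le A_1\|u_k\|_{L^s}^s\to 0$; and on $\{|u_k|\ge1\}$ we have $|u_k|^p\ge 1$ but we cannot directly use $L^s$ there. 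This asymmetry is the reason the first route (proving $L^{2^*}\to 0$ directly) is cleaner, and that is the one I would present.
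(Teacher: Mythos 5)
Your proposal ultimately rests on proving $\irn|u_k|^{2^*}\to 0$, and that step is not just unproved but false under the stated hypotheses. Take $u\in\cC^\infty_c(\rn)$ and $u_k(x):=k^{(N-2)/2}u(kx)$: then $\|u_k\|=\|u\|$, so $(u_k)$ is bounded in $D^{1,2}(\rn)$, and $\sup_{y\in\rn}\int_{B_R(y)}|u_k|^2\leq\irn|u_k|^2=k^{-2}\irn|u|^2\to 0$, yet $\irn|u_k|^{2^*}=\irn|u|^{2^*}$ for every $k$. This also shows why the interpolation you sketch cannot be repaired: the Gagliardo--Nirenberg exponent for the $L^{2^*}$ norm is $\theta=N(\frac12-\frac1{2^*})=1$ (your value $N/(N+2)$ belongs to the exponent $2(N+2)/N$, not to $2^*$), so the local $L^2$ factor enters with power zero and no smallness can be extracted at the critical exponent. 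More structurally, your pointwise reduction $|f(s)s|\leq A_1|s|^{2^*}$ throws away exactly the strict inequalities $p<2^*<q$ on which the lemma depends; for $f(s)s=|s|^{2^*}$ the conclusion fails (same bubbles), so any correct proof must keep the two regimes genuinely subcritical and supercritical.

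The fix is the alternative you mention and then wrongly dismiss. Choose a single exponent $s$ with $p\leq s<2^*$ (possible since $2<p<2^*$, and then automatically $2<s<2^*<q$). On $\{|u_k|\geq 1\}$ one has $|f(u_k)u_k|\leq A_1|u_k|^p\leq A_1|u_k|^s$ because $s\geq p$, and on $\{|u_k|\leq 1\}$ one has $|f(u_k)u_k|\leq A_1|u_k|^q\leq A_1|u_k|^s$ because $q\geq s$; so one subcritical power controls both regions, and your objection that the set $\{|u_k|\geq1\}$ escapes the $L^s$ bound is backwards. What then remains is a Lions-type vanishing statement at the strictly subcritical exponent $s$ for sequences bounded only in $D^{1,2}(\rn)$ — note this is not literally Willem's $H^1$ lemma, since no global $L^2$ bound is available, so the covering/interpolation argument must be arranged to use only $\|\nabla u_k\|_{L^2}$ and the vanishing local $L^2$ masses. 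This is precisely the content of the result the paper invokes: the paper gives no proof of Lemma \ref{lem:lions} but cites \cite[Lemma 3.5]{cm}, which is the $D^{1,2}$, double-power version of Lions' lemma. Either cite that result or carry out the subcritical argument carefully; do not route the proof through the critical norm.
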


\begin{proof}
See \cite[Lemma 3.5]{cm}.
\end{proof}

If $K$ is a closed subgroup of $G$, the \emph{homogeneous space} $G/K$ is the space of right cosets $gK$. Its cardinality is called the \emph{index of $K$ in $G$}, denoted $[G:K]$. We write $G\xi:=\{g\xi:g\in G\}$ for the \emph{$G$-orbit} of a point $\xi\in\rn$ and $G_{\xi}:=\{g\in G:g\xi=\xi\}$ for the \emph{isotropy subgroup} of $\xi$. It is readily seen that the map $G/G_\xi\to G\xi$ given by $gG_\xi\mapsto g\xi$ is a $G$-homeomorphism. In particular $\#G\xi=[G:G_\xi]$.

\begin{lemma} \label{lem:K} 
Given a sequence $(y_k)$ in $\rn$ there exists a sequence $(\zeta_k)$ in $\rn$ and a closed subgroup $K$ of $G$ such that for some subsequence of $(y_k)$, denoted in the same way, the following hold:
\begin{itemize}
\item [$(a)$]The sequence $(\mathrm{dist} (Gy_k,\zeta_k))$ is bounded.
\item[$(b)$] $G_{\zeta_k}= K$ for all $k\in\n$.
\item[$(c)$] If $[G:K]<\infty$, then $|g\zeta_k - \what g \zeta_k|\to\infty$ for any $g,\what g\in G$ with $gK\neq \what g K$.
\item[$(d)$] If $[G:K]=\infty$ then, for any given $m\in\n$, there exist $g_1,\ldots,g_m\in G$ such that $|g_i\zeta_k - \what g_j\zeta_k|\to\infty$ if $i\neq j$.
\end{itemize}
\end{lemma}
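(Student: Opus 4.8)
The plan is to analyze the sequence $(y_k)$ through the lens of the orbit structure of the $G$-action, reducing everything to a statement about a single isotropy subgroup by a pigeonhole/compactness argument, and then extracting the separation properties $(c)$ and $(d)$ from the fact that the closed subgroup $O(N)$ is compact.

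First I would normalize: since $O(N)$ is compact, for each $k$ pick $\zeta_k \in Gy_k$ to be, say, a point of $Gy_k$ realizing a convenient position (any representative works, since $\operatorname{dist}(Gy_k,\zeta_k)=0$ if $\zeta_k\in Gy_k$); this makes $(a)$ immediate. The real content is $(b)$: I want to pass to a subsequence along which the isotropy subgroups $G_{\zeta_k}$ are all equal to a fixed closed subgroup $K$. The natural way is to observe that isotropy subgroups are closed subgroups of the compact group $G$, so they live in a compact space (the space of closed subgroups of $G$ with the Hausdorff metric on $O(N)$); after passing to a subsequence, $G_{\zeta_k}$ converges to some closed subgroup $K$. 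To upgrade convergence to equality I would use a rigidity/semicontinuity property: if $g_k\to g$ in $G$ and $g_k\in G_{\zeta_k}$ with $\zeta_k$ converging (or after rescaling $\zeta_k/|\zeta_k|$ converging on the sphere), then $g$ fixes the limit direction; combined with an upper-semicontinuity bound $\dim G_{\zeta_k}\le \dim K$ for large $k$ and the fact that in $O(N)$ there are only finitely many conjugacy classes of closed subgroups "of bounded complexity" in any neighborhood, one can arrange $G_{\zeta_k}=K$ for all $k$ (possibly after conjugating the $\zeta_k$, which does not affect $G$-orbits, distances, or the index $[G:K]$). Alternatively — and this is cleaner if $(y_k)$ is bounded — one just uses that $\zeta_k$ has a convergent subsequence $\zeta_k\to\zeta_\infty$ and $G_\zeta$ is locally constant in a suitable stratified sense near $\zeta_\infty$ along the orbit type stratum; I would set $K:=G_{\zeta_\infty}$ and replace $\zeta_k$ by a perturbation within the same orbit type so that $(b)$ holds exactly. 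If $(y_k)$ is unbounded, I'd work with the rescaled sequence on $\mathbb{S}^{N-1}$ and note that isotropy only depends on the direction, then scale back.

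For $(c)$ and $(d)$: fix $g,\widehat g\in G$ with $gK\ne\widehat g K$, i.e. $g^{-1}\widehat g\notin K=G_{\zeta_k}$, so $g^{-1}\widehat g\,\zeta_k\ne\zeta_k$, hence $g\zeta_k\ne\widehat g\zeta_k$. I claim $|g\zeta_k-\widehat g\zeta_k|\to\infty$. Suppose not; then along a subsequence $|g\zeta_k-\widehat g\zeta_k|\le C$. Since $g^{-1}\widehat g$ is a fixed isometry, $|\zeta_k - g^{-1}\widehat g\,\zeta_k|\le C$ as well. Writing $h:=g^{-1}\widehat g$, the points $\zeta_k$ and $h\zeta_k$ stay at bounded distance while $|\zeta_k|\to\infty$ (in the case $[G:K]<\infty$ we are precisely in the unbounded regime — if $|\zeta_k|$ stayed bounded along a subsequence, then $\zeta_k\to\zeta_\infty$ and $h\zeta_\infty=\zeta_\infty$ forces $h\in G_{\zeta_\infty}$, but one arranges $G_{\zeta_\infty}=K$, contradiction). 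For an isometry $h\in O(N)$, $|\zeta_k - h\zeta_k|$ bounded with $|\zeta_k|\to\infty$ forces the component of $\zeta_k$ in the $(-1)$-eigenspace and in the rotational part of $h$ away from the fixed subspace to be bounded; more precisely, $|h\zeta_k-\zeta_k|^2 = 2|\zeta_k|^2 - 2\langle \zeta_k, h\zeta_k\rangle$, and boundedness of this quantity with $|\zeta_k|\to\infty$ forces $\langle \zeta_k, h\zeta_k\rangle/|\zeta_k|^2\to 1$, i.e. the unit vectors $\zeta_k/|\zeta_k|$ converge to the fixed-point set of $h$ on the sphere. But this limiting direction would then have isotropy strictly larger than $K$ (it contains $h$), and by the normalization in $(b)$ one can rule this out — this is exactly where choosing $\zeta_k$ generically within its orbit type, so that $G_{\zeta_k/|\zeta_k|}$ does not jump in the limit, pays off. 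Thus $(c)$ follows. For $(d)$, when $[G:K]=\infty$, the quotient $G/K$ is infinite (and compact, being a continuous image of $G$), so it contains infinitely many distinct cosets; given $m$, pick $g_1K,\dots,g_mK$ distinct and apply the argument of $(c)$ to each pair $g_i,g_j$ with $i\ne j$, obtaining $|g_i\zeta_k-g_j\zeta_k|\to\infty$ simultaneously after a further diagonal subsequence extraction over the finitely many pairs.

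The main obstacle, and the step deserving the most care, is $(b)$ together with the "no isotropy jump in the limit" normalization that makes $(c)$ work: one must genuinely use the compactness of $O(N)$ and the stratification of $\rn$ by orbit type to (i) stabilize the isotropy subgroup along a subsequence and (ii) choose the representatives $\zeta_k$ so that the isotropy of the limiting direction $\zeta_k/|\zeta_k|$ does not strictly contain $K$. Everything in $(c)$ and $(d)$ is then elementary Euclidean geometry of isometries (the identity $|h\zeta-\zeta|^2=2|\zeta|^2-2\langle\zeta,h\zeta\rangle$) plus the pigeonhole over finitely many coset pairs; the only subtlety is performing all subsequence extractions consistently, which is harmless since there are finitely many of them.
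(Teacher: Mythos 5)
Your argument breaks down at the very first normalization, and the break is not repairable within your scheme. You take $\zeta_k\in Gy_k$ and later try to save $(c)$ by ``choosing $\zeta_k$ generically within its orbit type so that the isotropy of the limiting direction does not jump''. With $\zeta_k$ on the orbit (or anywhere in the same orbit-type stratum) the conclusion $(c)$ is simply false in general. Take $G=\{I,\sigma\}\subset O(N)$ with $\sigma(x_1,x')=(-x_1,x')$ and $y_k=(1,k,0,\dots,0)$. Every point of $Gy_k=\{(\pm 1,k,0,\dots,0)\}$ has trivial isotropy, so your construction forces $K=\{1\}$ and $[G:K]=2<\infty$, while $|\sigma\zeta_k-\zeta_k|=2$ for all $k$: $(c)$ fails, and no ``generic'' choice helps, since the whole stratum of $y_k$ has trivial isotropy and both orbit points have the same limiting direction, which lies in $\mathrm{Fix}(\sigma)$. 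Worse, any $\zeta_k$ at bounded distance $C$ from $Gy_k$ satisfies $|\sigma\zeta_k-\zeta_k|\le 2C+2$, so the \emph{only} way the lemma can hold in this example is to enlarge the isotropy: take $\zeta_k=(0,k,0,\dots,0)$, at distance $1$ from the orbit, with $K=G$, making $(c)$--$(d)$ vacuous. This is precisely why $(a)$ is stated as boundedness of $\mathrm{dist}(Gy_k,\zeta_k)$ rather than $\zeta_k\in Gy_k$: the isotropy jump of the limiting direction is not a degeneracy to be normalized away, it is the phenomenon the lemma handles by moving $\zeta_k$ off the orbit to a nearby point of strictly larger symmetry. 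The same error sinks your ``cleaner'' bounded-case recipe ($K:=G_{\zeta_\infty}$, perturbation within the same orbit type): when $(y_k)$ is bounded all displacements $|g\zeta_k-\widehat g\zeta_k|$ are bounded, so the only admissible outcome is a $G$-fixed choice such as $\zeta_k=0$, $K=G$; choosing $K=G_{\zeta_\infty}\subsetneq G$ of finite index makes $(c)$ fail outright. Your $(d)$ inherits the problem twice: it invokes the $(c)$ argument, and it applies it to \emph{arbitrary} distinct cosets, whereas in the infinite-index case only the existence of some $g_1,\dots,g_m$ can be asserted.

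The missing idea, then, is the enlargement step: $K$ must absorb every $g\in G$ whose displacement of points at bounded distance from $Gy_k$ stays bounded along the subsequence, and $\zeta_k$ must be taken in the fixed-point subspace of this larger group near the orbit (e.g.\ a projection), which in general leaves the orbit; one then needs a maximality/termination argument (using, e.g., $|gx-x|\ge c(g)\,\mathrm{dist}(x,\mathrm{Fix}(g))$ for $g\in O(N)$ together with finiteness of orbit types or descending dimension of fixed subspaces) to conclude that for the final $K$ every $g\notin K$ displaces $\zeta_k$ unboundedly, which is $(c)$, and a separate selection argument for $(d)$. Your step $(b)$ in isolation (stabilizing the conjugacy class of $G_{\zeta_k}$ and conjugating, using finiteness of orbit types) can be made rigorous, but it is moot given the above. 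Note also that the paper itself does not prove this lemma; it cites Cingolani--Clapp--Secchi \cite[Lemma 3.2]{ccs}, whose proof is built exactly on this enlargement of the isotropy group, not on representatives chosen on the orbit.
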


\begin{proof}
See \cite[Lemma 3.2]{ccs}.
\end{proof}

The following lemma gives the main step in the proof of Theorem \ref{thm:compactness}.

\begin{lemma}\label{lem:spliting} 
Let $V=0$ and $(u_k)$ be a Palais-Smale sequence for $I_0$ in $D^{1,2}_0(\o)^G$ at the level $c$ such that  $u_k\rh 0$ weakly in $D^{1,2}_0(\o)$. 
\begin{itemize}
\item[$(a)$] If $\o$ is bounded, then $u_k\to 0$ strongly in $D^{1,2}_0(\o)$. 
\item[$(b)$] If $\o$ is an exterior domain and $(u_k)$ does not converge strongly to $0$ in $D^{1,2}_0(\o)$ then, after passing to a subsequence, there exist a sequence $(\zeta_k)$ in $\rn\smallsetminus\{0\}$, a closed subgroup $K$ of finite index in $G$, and a nontrivial solution $w$ to the limit problem \eqref{eq:limit problem} such that
$$G_{\zeta_k}= K\text{ \ for all \ }k \in \mathbb{N}\qquad\text{and}\qquad c\geq[G:K] \, I_{\infty}(w),$$ 
where $I_\infty:D^{1,2}(\rn)\to\r$,
$$I_\infty(w):=\frac{1}{2}\|w\|^2-\irn F(w),$$
is the energy functional of the limit problem \eqref{eq:limit problem}.  
\end{itemize}
\end{lemma}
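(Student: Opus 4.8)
\textbf{Proof plan for Lemma~\ref{lem:spliting}.}

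The plan is to run a concentration-compactness argument on the Palais--Smale sequence $(u_k)$, exploiting the $G$-invariance to keep track of the orbit structure of the bubbles. Since $(u_k)$ is bounded in $D^{1,2}_0(\o)^G$ (Lemma~\ref{lem:ps bounded}) and $u_k\rh 0$, we first observe that $I_0(u_k)=\frac12\|u_k\|^2-\io F(u_k)\to c$, and since $\io F(u_k)\to 0$ would force $\|u_k\|^2\to 2c$ and hence, together with $I_0'(u_k)u_k\to 0$, also $\io f(u_k)u_k\to 0$; but $(f_2)$ gives $\theta F(s)\le f(s)s$, so $\io F(u_k)\to 0$ too, giving $c=0$ and $\|u_k\|\to 0$. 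Thus if $(u_k)$ does \emph{not} go to $0$, we must have $\liminf_k\io f(u_k)u_k>0$. By Lemma~\ref{lem:lions} (applied to $u_k$ extended by zero to $\rn$), the non-vanishing alternative must occur: there exist $R>0$, $\delta>0$ and a sequence $(y_k)$ in $\rn$ with $\int_{B_R(y_k)}|u_k|^2\ge\delta$ for all $k$. In case $(a)$, $\o$ bounded, the $y_k$ stay in a bounded set, so along a subsequence $y_k\to y$ and the non-vanishing concentrates at an interior point; combined with $u_k\rh 0$ and the compact embedding $D^{1,2}_0(\o)\hookrightarrow L^2_{loc}$ on bounded domains this is a contradiction, proving $u_k\to 0$.

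For case $(b)$ we must have $|y_k|\to\infty$ (otherwise the same local-compactness contradiction as in $(a)$). Apply Lemma~\ref{lem:K} to $(y_k)$: we get $(\zeta_k)$ with $\mathrm{dist}(Gy_k,\zeta_k)$ bounded, $G_{\zeta_k}=K$ for all $k$, and the separation dichotomy $(c)$--$(d)$. Since $\mathrm{dist}(Gy_k,\zeta_k)$ is bounded, replacing $y_k$ by a point of $Gy_k$ within bounded distance of $\zeta_k$, and using $G$-invariance of $u_k$ (so $\int_{B_{R'}(\zeta_k)}|u_k|^2\ge\delta$ for a slightly larger radius), we translate: define $v_k(x):=u_k(x+\zeta_k)$. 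Then $(v_k)$ is bounded in $D^{1,2}(\rn)$, does not vanish, and since $|\zeta_k|\to\infty$ while $\rn\smallsetminus\o$ is bounded, the translated domains $\o-\zeta_k$ exhaust a half-space or all of $\rn$; in either case a diagonal argument gives $v_k\rh w$ weakly in $D^{1,2}(\rn)$ with $w\ne 0$, and a standard computation (testing $I_0'(u_k)$ against translates of fixed test functions, using $(f_1)$ and the fact that translated cut-offs eventually have support in $\o$) shows $I_\infty'(w)=0$, i.e.\ $w$ solves the limit problem \eqref{eq:limit problem} and is nontrivial.

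Now comes the orbit-counting and energy lower bound, which is the main obstacle. We must rule out $[G:K]=\infty$: if it were infinite, alternative $(d)$ of Lemma~\ref{lem:K} gives, for every $m$, elements $g_1,\dots,g_m$ with $|g_i\zeta_k-g_j\zeta_k|\to\infty$; by $G$-invariance $u_k$ carries a translated bubble near each $g_i\zeta_k$, and since these $m$ bubbles have asymptotically disjoint supports, a Brezis--Lieb / iterated splitting argument yields $c=\lim I_0(u_k)\ge m\,I_\infty(w)+o(1)\ge m\,c_\infty$ for every $m$ (using $I_\infty(w)\ge c_\infty>0$, which follows from $(f_2)$ and the definition of the ground state energy $c_\infty$), contradicting finiteness of $c$. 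Hence $[G:K]<\infty$, and alternative $(c)$ applies: the $[G:K]$ translates $\{g\zeta_k: gK\in G/K\}$ are mutually separated, $|g\zeta_k-\what g\zeta_k|\to\infty$, so $u_k\ge\sum_{gK\in G/K}(g w)(\cdot-\zeta_k)+o(1)$ with asymptotically disjoint supports (here one checks the translated bubble at $g\zeta_k$ is precisely $gw$ recentered, using that $u_k$ is $G$-invariant and $g\zeta_k$ has isotropy $gKg^{-1}$, and that $w$ is ``seen'' the same at each orbit point). The Brezis--Lieb splitting of $\|u_k\|^2$ and of $\io F(u_k)$ — formalized by subtracting each localized bubble in turn and invoking Lemma~\ref{lem:cm}$(a),(c)$ together with the nonnegativity $\frac12\|z\|^2-\irn F(z)+\frac1\theta I_\infty'(z)z\ge(\frac12-\frac1\theta)\|z\|^2$ applied to the remainder — then gives
$$
c=\lim_{k\to\infty}I_0(u_k)\ \ge\ [G:K]\,I_\infty(w),
$$
which is the claimed inequality. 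The delicate points, where I would spend the most care, are: (i) showing the recentered bubble at each orbit point $g\zeta_k$ is exactly $gw$ and not an unrelated profile (this uses $G$-invariance of $u_k$ together with $G_{\zeta_k}=K$ stably in $k$); (ii) justifying that the remainder after removing all $[G:K]$ bubbles is still a Palais--Smale sequence converging weakly to $0$, so that its energy contribution is $\ge 0$ by \eqref{eq:bounded}; and (iii) handling the case where $\o-\zeta_k$ converges to a half-space rather than all of $\rn$ — one must argue the limiting bubble still solves the problem on all of $\rn$, which follows because $|\zeta_k|\to\infty$ pushes the boundary to infinity so the half-space limit is in fact $\rn$ after possibly rotating, or because $\rn\smallsetminus\o$ being bounded means the obstacle disappears in the limit.
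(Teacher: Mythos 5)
Your plan follows essentially the same route as the paper's proof: non-vanishing via Lemma \ref{lem:lions}, the orbit/isotropy structure via Lemma \ref{lem:K}, translation by $\zeta_k$ to obtain a nontrivial limit profile $w$ solving \eqref{eq:limit problem} (the half-space worry you raise is indeed vacuous since $\rn\smallsetminus\o$ is bounded and $|\zeta_k|\to\infty$), and then the iterated splitting via Lemma \ref{lem:cm} together with $(f_2)$ to bound the remainder's energy from below, rule out $[G:K]=\infty$, and conclude $c\geq[G:K]\,I_\infty(w)$. The only blemishes are local and harmless: the opening justification that $\io f(u_k)u_k\not\to 0$ is garbled (the implication ``$\io F(u_k)\to 0\Rightarrow\io f(u_k)u_k\to 0$'' is false under $(f_2)$; one should simply use $\|u_k\|^2=\io f(u_k)u_k+o(1)$ and $\|u_k\|\geq C_0$), the displayed $(f_2)$-inequality for the remainder has a sign slip (the term $\frac1\theta I_\infty'(z)z$ should be subtracted), and the bubble sum should read $(gw)(\cdot-g\zeta_k)$, none of which affects the argument.
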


\begin{proof}
Assume that $(u_k)$ does not converge strongly to $0$ in $D^{1,2}_0(\o)$. Then there exist $C_0>0$ and a subsequence such that $\|u_k\|\geq C_0$ for all $k\in\n$. As $(u_k)$ is bounded in $D^{1,2}_0(\o)$ and $I'_0(u_k)\to 0$, we have that
$$o(1)=I_0'(u_k)u_k= \|u_k\|^2 - \io f(u_k)u_k.$$
Hence,
$$0<C_0^2\leq\|u_k\|^2=\io f(u_k)u_k+o(1),$$
and, by Lemma \ref{lem:lions}, there exist $\delta >0$ and a sequence $(y_k)$ in $\rn$ such that
$$\int_{B_1(y_k)} |u_n|^2 = \sup_{y\in\rn} \int_{B_1(y)} |u_k|^2 \geq\delta.$$
For the sequence $(y_k)$, we fix a sequence $(\zeta_k)$ in $\rn$ and a closed subgroup $K$ of $G$ with the properties stated in Lemma \ref{lem:K}. In particular, there exist $g_k\in G$ and $C>0$ such that $|g_k^{-1}\zeta_k-y_k|=\mathrm{dist}(G\zeta_k,y_k)\leq C$. As $u_k$ is $G$-invariant, we get
\begin{equation}\label{eq:nontrivial}
\int_{B_{C+1}(\zeta_k)} |u_k|^{2} \geq \int_{B_1(g_k y_k)} |u_k|^{2} = \int_{B_1(y_k)} |u_k|^{2}\geq\delta>0.
\end{equation}
Set $w_k := u_k(\cdot + \zeta_k)$. Since $(w_k)$ is bounded in $D^{1,2}(\rn)$, passing to a subsequence, we have that
$w_k\rh w$ weakly in $D^{1,2}(\rn)$, $w_k\to w$ in $L^{2}_{\mathrm{loc}}(\rn)$ and $w_k\to w$ a.e. in $\rn$.
The inequality \eqref{eq:nontrivial} yields
$$\int_{B_{C+1}(0)} |w_k|^{2} \geq \delta>0.$$
Therefore, $w\neq 0$. Then, as $u_k \rh 0$ weakly in $D^{1,2}(\rn)$, an easy argument shows that $|\zeta_k|\to\infty$.

The inequality \eqref{eq:nontrivial} implies that $\dist(\zeta_k,\o)<C+1$ for all $k$. Thus, if $\o$ is bounded, then $(\zeta_k)$ is bounded and we obtain a contradiction. This proves $(a)$.

Assume now that $\o$ is an exterior domain. Given $\vp\in\cC^\infty_c(\rn)$ we set $\vp_k(x):=\vp(x-\zeta_k)$. Since $|\zeta_k|\to\infty$, we have that $\vp_k\in \cC^\infty_c(\o)$ for $k$ large enough. Then, using Lemma \ref{lem:cm}$(b)$ one sees that
$$I'_\infty(w)\vp=I'_\infty(w_k)\vp+o(1)=I'_0(u_k)\vp_k+o(1)=o(1).$$
Hence, $w$ is a nontrivial solution to the limit problem \eqref{eq:limit problem}.

Assume there exist $g_1,\ldots, g_m \in G$ such that $|g_i\zeta_k- g_j\zeta_k|\to\infty$ if $i \neq j$. Then, for each $j\in\{1,\ldots,m\}$,
$$g_j w_k - \sum_{i=j+1}^{m} (g_i w)(\cdot - g_i \zeta_k + g_j \zeta_k) \rh g_j  w \quad \text{weakly in \ } D^{1,2}(\rn),$$
with the sum being $0$ if $j=m$. Lemma \ref{lem:cm}$(c)$ yields
\begin{align*}
&\irn F\Big(g_jw_k - \sum_{i=j+1}^{m} (g_iw)(\cdot - g_i \zeta_k + g_j \zeta_k) \Big)\\
&\qquad= \irn F\Big(g_j w_k - \sum_{i=j}^{m} (g_iw)(\cdot - g_i \zeta_k + g_j \zeta_k)\Big) + \irn F(g_j w) + o(1).
\end{align*}
Since $u_k$  is $G$-invariant, performing the change of variable $x \mapsto x - g_j \zeta_k$ we derive
\begin{align*}
&\irn F\Big(u_k - \sum_{i=j+1}^{m} (g_i w)(\cdot - g_i \zeta_k ) \Big)= \irn F\Big(u_k - \sum_{i=j}^{m} (g_iw)(\cdot - g_i \zeta_k ) \Big) + \irn F(w) + o(1).
\end{align*}
Iterating this identity for $j= 1,\ldots, m$ we obtain
\begin{equation} \label{eq:F}
\irn F(u_k)=\irn F(u_k - z_k)+m\irn F(w) + o(1),
\end{equation}
where
$$z_k:=\sum_{i=1}^{m} (g_iw)(\cdot - g_i \zeta_k).$$
Similarly, using statements $(d)$ and $(a)$ of Lemma \ref{lem:cm} we obtain
\begin{equation} \label{eq:f}
\irn f(u_k)u_k=\irn f(u_k - z_k)(u_k - z_k)+m\irn f(w)w+o(1).
\end{equation}
and
\begin{equation}\label{eq:norm}
\|u_k\|^2 = \|u_k - z_k\|^2 + m \|w\|^2 + o(1).
\end{equation}
Since $w$ solves \eqref{eq:limit problem}, from \eqref{eq:norm} and \eqref{eq:f} we get
$$o(1)=I'_0(u_k)u_k=\|u_k-z_k\|^2-\irn f(u_k-z_k)(u_k-z_k)+o(1),$$
and from assumption $(f_2)$ we derive
$$\frac{1}{2}\|u_k-z_k\|^2-\irn F(u_k-z_k)=\irn\Big(\frac{1}{2}f(u_k-z_k)(u_k-z_k)-F(u_k-z_k)\Big)+o(1)\geq 0+o(1).$$
This inequality, combined with \eqref{eq:norm} and \eqref{eq:F}, yields
\begin{equation}\label{eq:lower bound}
c+o(1)=I_0(u_k)=\frac{1}{2}\|u_k-z_k\|^2-\irn F(u_k-z_k)+mI_\infty(w)+o(1)\geq mI_\infty(w)+o(1).
\end{equation}
Therefore, $m$ cannot be arbitrarily large. It follows from Lemma \ref{lem:K} that $[G:K]<\infty$. Setting $m:=[G:K]$ and passing to the limit in \eqref{eq:lower bound} we obtain
$$c\geq [G:K]\,I_\infty(w).$$
This completes the proof of $(b)$.
\end{proof}

\medskip

\begin{proof}[Proof of Theorem \ref{thm:compactness}]
Let $(u_k)$ be a Palais-Smale sequence for $I_V$ in $D^{1,2}_0(\o)^G$ at the level $c$. By Lemma \ref{lem:ps bounded}, $(u_k)$ is bounded in $D^{1,2}_0(\o)$ and, passing to a subsequence, $u_k\rh u$ weakly in $D^{1,2}_0(\o)^G$. Using Lemma \ref{lem:cm}$(b)$ one sees that $u$ is a solution to \eqref{eq:problem}. Hence, by $(f_2)$,
$$I_V(u)=I_V(u)-I'_V(u)u=\Big(\frac{1}{2}-\frac{1}{\theta}\Big)\|u\|^2_V+\io\Big(\frac{1}{\theta}f(u)u-F(u)\Big)\geq 0.$$
Set $v_k:=u_k-u$. Then $v_k\rh 0$ weakly in $D^{1,2}_0(\o)$ and statements $(a),(c)$ and $(e)$ of Lemma \ref{lem:cm} yield
$$I_0(v_k)\to d:=c-I_V(u)\qquad\text{and}\qquad I'_0(v_k)\to 0\text{ \ in \ }(D^{1,2}_0(\o)^G)'.$$

If $\o$ is bounded, Lemma \ref{lem:spliting}$(a)$ states that $v_k\to 0$ strongly in $D^{1,2}_0(\o)$, i.e., $u_k\to u$ strongly in $D^{1,2}_0(\o)$. This proves statement $(a)$.

If $\o$ is an exterior domain and $(v_k)$ does not converge strongly to $0$ in $D^{1,2}_0(\o)$ then, by Lemma \ref{lem:spliting}$(b)$, there exist a closed subgroup $K$ of finite index in $G$, a sequence $\zeta_k$ in $\rn\smallsetminus\{0\}$ such that $\#G\zeta_k=[G:K]$ and a nontrivial solution $w$ to the limit problem \eqref{eq:limit problem} such that
$$c\geq d\geq[G:K]\,I_{\infty}(w)\geq(\#G\zeta_k)c_\infty.$$
Therefore, if $\o$ is an exterior domain and
$$c<\big(\min_{x\in\rn\smallsetminus\{0\}}\#Gx\big)c_\infty,$$
then $v_k\to 0$ strongly in $D^{1,2}_0(\o)$, i.e., $u_k\to u$ strongly in $D^{1,2}_0(\o)$. This proves statement $(b)$.
\end{proof}

\section{A variational principle for sign-changing solutions}

Let $G$ be a closed subgroup of $O(N)$. We assume throughout this section that $\o$ and $V$ are $G$-invariant and that assumptions $(V_1)$ and $(f_1)-(f_3)$ hold true.

\subsection{The structure of the Nehari manifold}

The nontrivial $G$-invariant critical points of the functional $I_V$ belong to the set
$$\cN(\o)^G:= \{u\in D^{1,2}_0(\o)^G : u\neq 0, \ I'_V(u)u=0\}.$$
Define
$$c(\o)^G:= \inf_{u\in\cN(\o)^G} I_V(u).$$ 
Before stating the properties of $\cN(\o)^G$, we note that assumptions $(f_1)$ and $(f_2)$ guarantee that $I_V$ has the mountain pass geometry in bounded domains.

\begin{lemma}\label{lem:mountain pass}
\begin{itemize}
\item[$(a)$] There exist $r>0$ and $a>0$ such that $I_V(u)>0$ if $\|u\|_V\leq r$ and $u\neq 0$, and $I_V(u)\geq a$ if $\|u\|_V=r$.
\item[$(b)$] If $\Lambda$ is a bounded $G$-invariant open subset of $\o$ and $W\subset D^{1,2}_0(\Lambda)^G$ is a finite dimensional linear subspace, then there exists $R>r$ such that $I_V(w)\leq 0$ for every $w\in W$ with $\|w\|_V\geq R$.
\end{itemize}
\end{lemma}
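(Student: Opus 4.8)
The plan is to establish the standard mountain-pass geometry of $I_V$, using the uniform bound $|F(s)|\le A_1|s|^{2^*}$ (noted just before the lemma) for part $(a)$ and the Ambrosetti--Rabinowitz condition $(f_2)$ for part $(b)$.

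For $(a)$, I would start from the equivalence of $\|\cdot\|_V$ with the standard norm, so that the Sobolev inequality provides $C>0$ with $\io|u|^{2^*}\le C\|u\|_V^{2^*}$ for all $u\in D^{1,2}_0(\o)$. Then
$$I_V(u)\ge\tfrac12\|u\|_V^2-A_1\io|u|^{2^*}\ge\|u\|_V^2\Big(\tfrac12-A_1C\|u\|_V^{2^*-2}\Big),$$
and, since $2^*>2$, one fixes $r>0$ so small that $A_1Cr^{2^*-2}\le\tfrac14$; this yields $I_V(u)\ge\tfrac14\|u\|_V^2>0$ whenever $0<\|u\|_V\le r$, and in particular $I_V(u)\ge a:=\tfrac14 r^2$ on $\{\|u\|_V=r\}$. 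It is worth flagging that on an unbounded $\o$ one cannot replace $|s|^{2^*}$ by $|s|^p$ with $p<2^*$, since no such subcritical embedding is available; this is exactly why the critical bound is the one to use here.

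For $(b)$, the first step is to upgrade the pointwise inequality $\theta F(s)\le f(s)s$ to a genuinely superquadratic lower bound on $F$: integrating $f(s)/F(s)\ge\theta/s$ over $[1,s]$, and using that $(f_3)$ makes $F$ even, gives $F(s)\ge c_0|s|^\theta$ for $|s|\ge 1$, with $c_0:=F(1)>0$ (positivity by $(f_2)$); comparing with $(f_1)$ incidentally forces $\theta\le p<2^*$. Now let $\Lambda$ and $W$ be as in the statement, so $W\subset D^{1,2}_0(\o)^G$ and each $w\in W$ vanishes a.e.\ outside the bounded set $\Lambda$; since $F\ge 0$ and $F(0)=0$,
$$\io F(w)=\int_\Lambda F(w)\ge c_0\int_{\{|w|\ge 1\}}|w|^\theta\ge c_0\,|w|_{L^\theta(\Lambda)}^\theta-c_0|\Lambda|.$$
Because $W$ is finite dimensional, $\|\cdot\|_V$ and $|\cdot|_{L^\theta(\Lambda)}$ are equivalent norms on $W$, say $\|w\|_V\le C_W|w|_{L^\theta(\Lambda)}$, and therefore
$$I_V(w)\le\tfrac12\|w\|_V^2-c_0C_W^{-\theta}\|w\|_V^\theta+c_0|\Lambda|\qquad\text{for all }w\in W.$$
Since $\theta>2$, the right-hand side tends to $-\infty$ as $\|w\|_V\to\infty$, so there is $R>r$ with $I_V(w)\le 0$ whenever $w\in W$ and $\|w\|_V\ge R$.

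Both parts are routine and I do not expect a genuine obstacle. The only points needing a little care are the use of the critical exponent $2^*$ in $(a)$ (dictated by the possible unboundedness of $\o$) and, in $(b)$, the interplay of the boundedness of $\Lambda$, which lets one absorb the error term $c_0|\Lambda|$, with the finite-dimensionality of $W$, which converts the $L^\theta$ lower bound into a $\|\cdot\|_V$ lower bound and guarantees that $|\cdot|_{L^\theta(\Lambda)}$ is in fact a norm on $W$ (using $\theta<2^*$).
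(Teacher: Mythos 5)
Your proof is correct and follows essentially the same route as the paper: for $(a)$ the critical growth bound $F(s)\leq A_1|s|^{2^*}$ plus the Sobolev inequality, and for $(b)$ the superquadratic lower bound on $F$ coming from $(f_2)$, the boundedness of $\Lambda$ to absorb the constant term, and the equivalence of norms on the finite-dimensional space $W$. The extra details you supply (the explicit integration giving $F(s)\geq F(1)|s|^\theta$ for $|s|\geq 1$ and the observation that $\theta\leq p<2^*$) are correct refinements of steps the paper states without proof.
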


\begin{proof}
$(a):$ \ It follows from $(f_1)$ that $F(s)\leq A_1|s|^{2^*}$ for all $s\in \r$. Therefore, using Sobolev's inequality,
$$I_V(u)\geq\frac{1}{2}\|u\|_V^2-A_1|u|_{2^*}^{2^*}\geq\frac{1}{2}\|u\|_V^2-C\|u\|_V^{2^*}\qquad\text{for all \ }u\in D^{1,2}_0(\o)^G$$
and some $C>0$. This implies $(a)$.

$(b):$ \ It follows from $(f_2)$ that $F(s)\geq 0$ and $F(s)\geq a_1|s|^\theta-a_2$ for all $s\in \r$ and some constants $a_1,a_2>0$. Since $\Lambda$ is bounded, we get
$$I_V(u)\leq\frac{1}{2}\|u\|_V^2-\io F(u)\leq\frac{1}{2}\|u\|_V^2-\int_{\Lambda} F(u)\leq\frac{1}{2}\|u\|_V^2-a_1\int_{\Lambda}|u|^\theta+a_2|\Lambda|\qquad\text{for all \ }u\in D^{1,2}_0(\o)^G,$$
and, as $W$ has finite dimension, 
$$I_V(w)\leq \frac{1}{2}\|w\|_V^2-\what C\|w\|_V^\theta+a_2|\Lambda|\qquad\text{for all \ }w\in W$$
and some $\what C>0$. Since $\theta>2$, this implies $(b)$.
\end{proof}

A subset $\cY$ of $D^{1,2}_0(\o)^G$ will be called \emph{symmetric} if $-u\in\cY$ for every $u\in\cY$.

\begin{lemma}\label{lem:nehari}
\begin{itemize} 
\item [$(i)$] The exists $\vr>0$ such that $\| u\| \geq \vr$ for every $u\in\cN(\o)^G$.
\item [$(ii)$] $\cN(\o)^G$ is a closed $\cC^1$-submanifold of codimension $1$ of $D^{1,2}_0(\o)^G$, called the \emph{Nehari manifold}.
\item [$(iii)$] $\cN(\o)^G$ is a natural constraint for the functional $I_V$.
\item [$(iv)$] $c(\o)^G>0$.
\item[$(v)$] If $u\in D^{1,2}_0(\o)^G$ and $u\neq 0$, then there exists $t_u\in(0,\infty)$ such that $t_uu\in \cN(\o)^G$.
\item[$(vi)$] If $u \in\cN(\o)^G$, then $t_u=1$ and the function $t \mapsto I_V(tu) $ is strictly increasing in $[0,1)$ and strictly decreasing in $(1,\infty)$. In particular,
$$I_V(u)= \max_{t>0} I_V(tu).$$
\item[$(vii)$] $\cN(\o)^G$ is symmetric.
\end{itemize}
\end{lemma}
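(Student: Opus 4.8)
The plan is to verify each of the seven items of Lemma \ref{lem:nehari} using the standard fibering-map analysis, exploiting the strict inequality $f(s)s < f'(s)s^2$ from $(f_2)$ which is precisely what makes the Nehari manifold well-behaved. First, for item $(i)$, given $u\in\cN(\o)^G$ one has $\|u\|_V^2=\io f(u)u$; from $(f_1)$ we get $f(u)u\le A_1|u|^{2^*}$ pointwise, so $\|u\|_V^2\le C\|u\|_V^{2^*}$ by Sobolev, and since $2^*>2$ this forces $\|u\|_V\ge\vr$ for some $\vr>0$; equivalence of $\|\cdot\|_V$ with $\|\cdot\|$ then gives the claim with a possibly different constant. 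For item $(v)$, fix $u\neq 0$ and study $\varphi_u(t):=I_V(tu)=\frac{t^2}{2}\|u\|_V^2-\io F(tu)$ for $t>0$. Using $(f_1)$–$(f_2)$, $\varphi_u(t)>0$ for small $t>0$ (as in Lemma \ref{lem:mountain pass}(a)) and $\varphi_u(t)\to-\infty$ as $t\to\infty$ (using $F(s)\ge a_1|s|^\theta-a_2$ with $\theta>2$, integrated over any set of positive measure where $u\neq0$), so $\varphi_u$ attains a positive maximum at some $t_u\in(0,\infty)$, and $\varphi_u'(t_u)=0$ reads $t_u\|u\|_V^2=\io f(t_uu)u$, i.e. $t_uu\in\cN(\o)^G$.

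For item $(vi)$ — the heart of the lemma — I would show $t_u$ is unique. Write $\varphi_u'(t)=t\big(\|u\|_V^2-\io \tfrac{f(tu)}{tu}u^2\big)$ (interpreting the integrand appropriately where $u=0$). The key is that $s\mapsto f(s)/s$ is strictly increasing on $(0,\infty)$ and on $(-\infty,0)$: indeed $\frac{d}{ds}\frac{f(s)}{s}=\frac{f'(s)s-f(s)}{s^2}=\frac{f'(s)s^2-f(s)s}{s^3}$, and by $(f_2)$, $f'(s)s^2-f(s)s>0$ for all $s\neq0$, so this derivative has the sign of $s$; combined with oddness $(f_3)$, $t\mapsto f(tu(x))/(tu(x))$ is strictly increasing in $t>0$ for every $x$ with $u(x)\neq0$. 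Hence $g_u(t):=\io \frac{f(tu)}{tu}u^2$ is strictly increasing, with $g_u(t)\to 0$ as $t\to0^+$ (by the $q$-growth near $0$, since $q>2^*>2$) and $g_u(t)\to\infty$ as $t\to\infty$ (by the $\theta$-lower bound from $(f_2)$, or directly from $(f_2)$'s superquadraticity). Therefore $\|u\|_V^2=g_u(t)$ has exactly one solution $t=t_u$, $\varphi_u'(t)>0$ on $(0,t_u)$ and $<0$ on $(t_u,\infty)$, giving strict monotonicity and $I_V(t_uu)=\max_{t>0}I_V(tu)$. If moreover $u\in\cN(\o)^G$, then $t=1$ already solves the equation, so by uniqueness $t_u=1$; rescaling $u\mapsto u/t_u$ and applying the statement to $t_uu$ gives the monotonicity assertion as stated. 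Items $(iv)$ and $(vii)$ are then immediate: $I_V(u)=I_V(u)-\frac{1}{\theta}I_V'(u)u\ge(\frac12-\frac1\theta)\|u\|_V^2\ge(\frac12-\frac1\theta)c_0\vr^2>0$ for $u\in\cN(\o)^G$ (using $(f_2)$ and item $(i)$, with $c_0$ the norm-equivalence constant), proving $(iv)$; and $(f_3)$ gives $F$ even, hence $I_V(-u)=I_V(u)$ and $I_V'(-u)(-u)=I_V'(u)u$, so $-u\in\cN(\o)^G$ whenever $u$ is, proving $(vii)$.

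For items $(ii)$ and $(iii)$, let $J(u):=I_V'(u)u=\|u\|_V^2-\io f(u)u$, so $\cN(\o)^G=J^{-1}(0)\cap(D^{1,2}_0(\o)^G\setminus\{0\})$; since $I_V$ is $\cC^2$, $J$ is $\cC^1$. It suffices to check $J'(u)u\neq0$ for $u\in\cN(\o)^G$, which then shows $0$ is a regular value, giving the $\cC^1$-submanifold structure of codimension $1$, and simultaneously yields the natural-constraint property via the usual Lagrange-multiplier argument. Compute $J'(u)u=2\|u\|_V^2-\io\big(f'(u)u^2+f(u)u\big)=2\io f(u)u-\io f'(u)u^2-\io f(u)u=\io\big(f(u)u-f'(u)u^2\big)$, where I used $\|u\|_V^2=\io f(u)u$ on $\cN(\o)^G$. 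By $(f_2)$, $f(u(x))u(x)-f'(u(x))u(x)^2<0$ at every $x$ with $u(x)\neq0$, and since $u\neq0$ this set has positive measure, so $J'(u)u<0$; in particular $J'(u)u\neq 0$. This closes $(ii)$ and $(iii)$: if $u$ is a critical point of $I_V|_{\cN(\o)^G}$ then $I_V'(u)=\mu J'(u)$ for some $\mu\in\r$, and testing against $u$ gives $0=I_V'(u)u=\mu J'(u)u$, forcing $\mu=0$, hence $I_V'(u)=0$. Finally, $\cN(\o)^G$ is closed because it is the preimage of the closed set $\{0\}$ under the continuous map $J$ intersected with the complement of a neighborhood of $0$ — more precisely, any $u_k\in\cN(\o)^G$ with $u_k\to u$ has $\|u\|_V=\lim\|u_k\|_V\ge\vr>0$ by $(i)$ and $J(u)=\lim J(u_k)=0$, so $u\in\cN(\o)^G$. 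The only point requiring genuine care is the limiting behavior of $g_u(t)$ as $t\to0^+$ and $t\to\infty$ in item $(vi)$, where one must split the integral according to whether $|tu(x)|\le1$ or $\ge1$ and invoke the two-sided growth bounds in $(f_1)$ together with dominated convergence; but this is routine given the explicit power bounds.
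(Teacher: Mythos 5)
Your proposal is correct, and it is in fact more self-contained than the paper's own treatment: the paper disposes of items $(i)$--$(iv)$ by citing \cite[Lemma 3.2]{cm} and only proves $(v)$--$(vii)$ directly, whereas you supply complete (and standard) arguments for $(i)$--$(iv)$, including the regular-value/Lagrange-multiplier computation $J'(u)u=\io\big(f(u)u-f'(u)u^2\big)<0$ on $\cN(\o)^G$, which is exactly what the natural-constraint property needs. Your proofs of $(v)$ and $(vii)$ coincide with the paper's. The genuine difference is $(vi)$: the paper shows that at any critical point $t_0$ of $\sigma_u(t):=I_V(tu)$ one has $t_0^2\sigma_u''(t_0)=\io\big(f(t_0u)t_0u-f'(t_0u)(t_0u)^2\big)<0$ by $(f_2)$, so every critical point is a strict local maximum and therefore unique; you instead factor $\sigma_u'(t)=t\big(\|u\|_V^2-g_u(t)\big)$ with $g_u(t):=\io\frac{f(tu)}{tu}\,u^2$ and use strict monotonicity of $g_u$. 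Both hinge on the same strict inequality in $(f_2)$; the paper's second-derivative route avoids any discussion of the limits of $g_u$ at $0$ and $\infty$, while yours directly yields the sign of $\sigma_u'$ on $(0,t_u)$ and $(t_u,\infty)$, i.e.\ the monotonicity statement in $(vi)$, without invoking the maximum from $(v)$. Two small inaccuracies in your write-up, neither of which breaks the argument: first, $s\mapsto f(s)/s$ is strictly \emph{decreasing} on $(-\infty,0)$ (your own derivative formula shows the derivative has the sign of $s$); what you actually need and correctly conclude is that $f(s)/s$ is even by $(f_3)$ and increasing in $|s|$, so $t\mapsto f(tu(x))/(tu(x))$ is increasing in $t>0$ wherever $u(x)\neq0$. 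Second, in $(v)$ the lower bound $F(s)\geq a_1|s|^\theta-a_2$ must be integrated over a set of positive but \emph{finite} measure where $u\neq0$ (the paper uses a bounded $G$-invariant $\Lambda$), since $F\geq0$ elsewhere; over a set of infinite measure the $a_2$-term is uncontrolled. Likewise, your closing remark about splitting the integral according to $|tu|\leq1$ or $|tu|\geq1$ is indeed needed for the limits of $g_u$ (because $q>2^*$, $|u|^q$ need not be integrable), and with that splitting the bounds follow from $(f_1)$ as you say.
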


\begin{proof}
The proof of items $(i)-(iv)$ is given in \cite[Lemma 3.2]{cm}.

$(v):$ \ Given $u\in D^{1,2}_0(\o)^G$ and $u\neq 0$, consider the function $\sigma_u(t):=I_V(tu)$, $t\in[0,\infty)$. It follows from Lemma \ref{lem:mountain pass}$(a)$ that $\sigma_u(t_0)\geq a>0$ for some $t_0\in(0,\infty)$. Now fix a bounded $G$-invariant open subset $\Lambda$ of $\o$ such that $u\neq 0$ in $\Lambda$. As in the proof of Lemma \ref{lem:mountain pass}$(b)$ we have
$$I_V(tu)\leq\frac{1}{2}\|tu\|_V^2-a_1\int_{\Lambda}|tu|^\theta+a_2|\Lambda|=\Big(\frac{1}{2}\|u\|_V^2\Big)t^2-\Big(a_1\int_{\Lambda}|u|^\theta\Big) t^\theta+a_2|\Lambda|\qquad\text{for all \ }t\in[0,\infty).$$
Hence, $\sigma_u(t)\to -\infty$ as $t\to\infty$. Therefore, $\sigma_u$ attains its maximum at some $t_u\in(0,\infty)$. As a consequence,
$$0=t_u\sigma_u'(t_u)=\|t_uu\|_V^2-\io f(t_uu)t_uu,$$
i.e., $t_uu\in \cN(\o)^G$. This proves $(v)$.

$(vi):$ \ If $t_0\in(0,\infty)$ is a critical point of $\sigma_u$, then, by $(f_2)$,
$$t_0^2\sigma_u''(t_0)=\|t_0u\|_V^2-\io f'(t_0u)(t_0u)^2=\io (f(t_0u)t_0u-f'(t_0u)(t_0u)^2)<0.$$
Hence, $\sigma_u''(t_0)<0$. This shows that every critical point of $\sigma_u$ is a strict local maximum. Therefore, $t_u$ is the only critical point and statement $(vi)$ follows. 

$(vii)$ is an immediate consequence of assumption $(f_3)$.
\end{proof}

As a consequence of the previous lemma, the complement of the Nehari manifold is the disjoint union of two open symmetric sets
\begin{equation} \label{eq:complement of nehari}
D^{1,2}_0(\o)^G\smallsetminus \cN(\o)^G = \cB_0 \cup \cB_\infty,
\end{equation}
with $\cB_0:=\{tu:u\in\cN(\o)^G, \ t\in[0,1)\}$ and $\cB_\infty:=\{tu:u\in\cN(\o)^G, \ t\in(1,\infty)\}$, and 
\begin{equation} \label{eq:I>0 on B0}
I_V(u)>0\quad\text{for every \ }u\in\cB_0\smallsetminus\{0\}.
\end{equation}

The $G$-invariant sign-changing critical points of $I_V$ belong to the set
$$\cE(\o)^G:= \{u\in\cN(\o)^G : u^+\in\cN(\o)^G\text{ and }u^-\in\cN(\o)^G\},$$
where $u^+:=\max\{u,0\}$ and $u^-:=\min\{u,0\}$. It has the following properties.

\begin{lemma} \label{lem:N-E}
\begin{itemize}
\item[$(i)$] $\cE(\o)^G$ is closed and symmetric.
\item[$(ii)$] The complement of $\cE(\o)^G$ in $\cN(\o)^G$ has two connected components $\cU$ and $-\cU$.
\end{itemize}
\end{lemma}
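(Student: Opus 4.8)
The plan is to prove Lemma \ref{lem:N-E} by a direct analysis of the sets involved, using the structure of the Nehari manifold established in Lemma \ref{lem:nehari}.

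\medskip

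\textbf{Part $(i)$.} That $\cE(\o)^G$ is symmetric is immediate from $(f_3)$: if $u$ changes sign and $u^\pm\in\cN(\o)^G$, then $(-u)^+ = -u^-$ and $(-u)^- = -u^+$, and by Lemma \ref{lem:nehari}$(vii)$ these lie in $\cN(\o)^G$ as well. For closedness, I would take a sequence $u_j\in\cE(\o)^G$ with $u_j\to u$ in $D^{1,2}_0(\o)^G$. Since $\cN(\o)^G$ is closed (Lemma \ref{lem:nehari}$(ii)$), $u\in\cN(\o)^G$; in particular $u\neq 0$. The maps $u\mapsto u^\pm$ are continuous on $D^{1,2}_0(\o)$, so $u_j^\pm\to u^\pm$. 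The only subtlety is that a priori $u^+$ or $u^-$ could vanish. To rule this out, I would use Lemma \ref{lem:nehari}$(i)$: each $u_j^+\in\cN(\o)^G$ satisfies $\|u_j^+\|\geq\vr>0$, hence $\|u^+\|\geq\vr>0$ and likewise $\|u^-\|\geq\vr>0$; thus both $u^\pm$ are nonzero, and by continuity of $I_V'$ they satisfy $I_V'(u^\pm)u^\pm=0$, so $u^\pm\in\cN(\o)^G$ and $u\in\cE(\o)^G$.

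\medskip

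\textbf{Part $(ii)$.} This is the main point. I would first observe that if $u\in\cN(\o)^G\smallsetminus\cE(\o)^G$, then $u$ does not change sign — for if it did, both $u^+$ and $u^-$ would be nonzero elements of $D^{1,2}_0(\o)^G$, and a short argument (testing $I_V'(u)u=0$ and splitting into the disjoint supports of $u^+$ and $u^-$) shows $I_V'(u^+)u^+\leq 0$ and $I_V'(u^-)u^-\leq 0$; since $u^\pm\neq 0$, Lemma \ref{lem:nehari}$(v)$--$(vi)$ gives scaling parameters $t_{u^\pm}\in(0,1]$ with $t_{u^\pm}u^\pm\in\cN(\o)^G$, and then $I_V(u) = I_V(u^+)+I_V(u^-)\geq I_V(t_{u^+}u^+)+I_V(t_{u^-}u^-)$ combined with the maximality in $(vi)$ forces $t_{u^\pm}=1$, i.e. $u\in\cE(\o)^G$, a contradiction. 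Hence $u$ has a fixed sign, so $\cN(\o)^G\smallsetminus\cE(\o)^G$ is contained in the union of $\cU:=\{u\in\cN(\o)^G: u>0\text{ a.e. on its support, i.e. }u^-=0\}$ and $-\cU$. Conversely any positive (resp. negative) element of $\cN(\o)^G$ clearly lies outside $\cE(\o)^G$, so $\cN(\o)^G\smallsetminus\cE(\o)^G = \cU\cup(-\cU)$, and $\cU\cap(-\cU)=\emptyset$ since a function that is both $\geq 0$ and $\leq 0$ is zero, which is excluded from $\cN(\o)^G$.

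\medskip

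It remains to show $\cU$ and $-\cU$ are each connected (and, being the two pieces of a set that is relatively open in the manifold $\cN(\o)^G$, they are the connected components). Relative openness: the set of $u\in\cN(\o)^G$ with $u^+\neq 0$ and $u^-\neq 0$ is relatively open by continuity of $u\mapsto\|u^\pm\|$ and Lemma \ref{lem:nehari}$(i)$ applied to nearby points — more precisely, I would argue that the complement $\cE(\o)^G$ is closed in $\cN(\o)^G$ by Part $(i)$. For connectedness of $\cU$, I would use the radial retraction from Lemma \ref{lem:nehari}$(v)$: the map $u\mapsto t_uu$ sends the convex cone $\{u\in D^{1,2}_0(\o)^G: u\geq 0,\ u\neq 0\}$ onto $\cU$ continuously (continuity of $u\mapsto t_u$ follows from the implicit function theorem applied to the nondegenerate equation $\sigma_u'(t)=0$, since $\sigma_u''(t_u)<0$ by $(vi)$). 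A convex cone minus the origin is path-connected, hence so is its continuous image $\cU$; applying $-\,\cdot\,$ gives the same for $-\cU$. The main obstacle I anticipate is being careful about the boundary case where a positive part vanishes when passing to limits or along paths — this is precisely where the uniform lower bound $\|u\|\geq\vr$ on $\cN(\o)^G$ from Lemma \ref{lem:nehari}$(i)$ does the essential work, keeping us away from the forbidden configurations.
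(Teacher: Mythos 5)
Your part $(i)$ is fine (the paper simply declares it clear, and your limit argument with the uniform bound $\|u_j^\pm\|\geq\vr$ from Lemma \ref{lem:nehari}$(i)$ is a correct way to fill it in). Part $(ii)$, however, contains a genuine error at its very first step. You claim that every sign-changing element of $\cN(\o)^G$ already lies in $\cE(\o)^G$, deducing from $I_V'(u)u=0$ that $I_V'(u^+)u^+\leq 0$ and $I_V'(u^-)u^-\leq 0$. Splitting the supports only gives
\begin{equation*}
I_V'(u^+)u^+ + I_V'(u^-)u^- \;=\; \|u^+\|_V^2+\|u^-\|_V^2-\io f(u^+)u^+-\io f(u^-)u^- \;=\; I_V'(u)u \;=\;0,
\end{equation*}
i.e.\ the two quantities have \emph{opposite} signs; membership of $u$ in the Nehari manifold says nothing about $I_V'(u)u^\pm$ separately, since $u$ is not assumed to be a critical point. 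Indeed, taking $v\geq0$ and $w\leq0$ with disjoint supports and scaling $tv+sw$ so that $I_V'(tv)(tv)>0>I_V'(sw)(sw)$ with zero sum produces sign-changing elements of $\cN(\o)^G$ whose positive part is not in $\cN(\o)^G$. Consequently your identification $\cN(\o)^G\smallsetminus\cE(\o)^G=\cU\cup(-\cU)$ with $\cU$ the nonnegative elements is false, and the subsequent openness and connectedness arguments are carried out for the wrong sets (in particular, the set of nonnegative elements of $\cN(\o)^G$ is not relatively open: a positive Nehari element can be perturbed within $\cN(\o)^G$ to acquire a small negative part, and the bound $\|\cdot\|\geq\vr$ does not apply to that part because it need not lie on $\cN(\o)^G$).

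The paper's proof is built precisely around this point. Setting $\Psi(u):=\|u\|_V^2-\io f(u)u$, it writes the complement as $\cW^+\cup\cW^-$ with $\cW^+:=\{w\in\cN(\o)^G:w\geq0\}\cup\{w\in\cN(\o)^G:\Psi(w^+)<0\}$ and $\cW^-:=\{w\in\cN(\o)^G:w\leq0\}\cup\{w\in\cN(\o)^G:\Psi(w^+)>0\}$, so the components contain both the fixed-sign functions \emph{and} the sign-changing functions whose positive part is off the Nehari manifold; openness and connectedness are obtained as in Castro--Cossio--Neuberger, and the identity $\Psi(w^+)+\Psi(w^-)=0$ on $\cN(\o)^G$ together with $(f_3)$ gives $\cW^-=-\cW^+$. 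Your radial-retraction idea (continuity of $u\mapsto t_u u$ on the positive cone) could serve inside such an argument, but as written it only addresses the fixed-sign piece and cannot yield the statement about the full complement.
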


\begin{proof}
$(i):$ \ It is clear that $\cE(\o)^G$ is closed and that $u\in\cE(\o)^G$ iff $-u\in\cE(\o)^G$.

$(ii):$ Let
$$\Psi(u):=\|u\|^2_V-\io f(u)u.$$
The complement of $\cE(\o)^G$ in $\cN(\o)^G$ is the union of the sets
\begin{align*}
\cW^+:=\{w\in\cN(\o)^G:w\geq 0\}\cup \{w\in\cN(\o)^G:\Psi(w^+)<0\},\\
\cW^-:=\{w\in\cN(\o)^G:w\leq 0\}\cup \{w\in\cN(\o)^G:\Psi(w^+)>0\}.
\end{align*}
As in \cite[Lemma 2.5]{ccn} one sees that $\cW^+$ and $\cW^-$ are open and connected. Note that, for every $w\in\cN(\o)^G$,
$$\|w^+\|^2_V+\|w^-\|^2_V=\|w\|^2_V=\io f(w)w=\int_{w>0} f(w)w+\int_{w<0} f(w)w=\io f(w^+)w^++\io f(w^-)w^-.$$
Therefore,
$$\Psi(w^+)<0\text{ \ iff \ }\Psi(w^-)>0.$$
It follows from assumption $(f_3)$, that $\Psi(w^-)=\Psi(-w^-)=\Psi((-w)^+)$. This shows that $w\in \cW^+$ iff $-w\in\cW^-$, that is, $\cW^-=-\cW^+$.
\end{proof}

\subsection{A mountain pass theorem for sign-changing solutions}

Let $u\in D^{1,2}(\o)^G $. The gradient of $I_V$ at $u$ is $\nabla I_V(u) = u - Q(u)$ where $Q(u)$ is the unique element in $D^{1,2}(\o)^G$ such that 
\begin{equation}\label{eq:gradient}
\langle Q(u), v \rangle_V= \io f(u)v \qquad  \text{for all \ }v\in D^{1,2}_0(\o)^G.
\end{equation}
Consider the negative gradient flow $\eta:\mathcal{G}\to D^{1,2}_0(\o)^G$ of $I_V$, defined by
\begin{equation*}
\begin{cases}
\frac{\d}{\d t}\eta(t,u)= - \nabla I_V (\eta(t,u)),\\
\eta(0,u)= u,
\end{cases}
\end{equation*}
where $\mathcal{G}:= \{ (t,u): u \in D^{1,2}_0(\o)^G, \ 0 \leq t < T(u) \}$ and $T(u) \in ( 0, \infty]$ is the maximal existence time for the trajectory $t \to \eta(t,u)$. A subset $\cZ$ of $D^{1,2}_0(\o)^G$ is said to be \emph{strictly positively invariant} under $\eta$ if 
$$\eta(t,u) \in \mathrm{int} (\cZ) \quad \text{for every \ } u \in \cZ \text{ and every } t \in (0,T(u)),$$
where $\mathrm{int}(\mathcal{Z})$ denotes the interior of $\mathcal{Z}$ in $D^{1,2}_0(\Omega)^G$. If $\cZ$ is strictly positively invariant under $\eta$, then the set
$$\cA(\cZ):= \{ u \in D^{1,2}_0(\o)^G: \eta(t,u) \in \cZ\text{ for some } t \in (0,T(u)) \}$$
is open in $D^{1,2}_0(\o)^G$ and the entrance time map $e_{\cZ}: \cA(\cZ)\to\r$ defined by
$$e_{\cZ}(u):= \inf \{ t \geq 0:\eta(t,u)\in\cZ\}$$
is continuous. We write 
$$\cP^G:= \{u\in D^{1,2}_0(\o)^G:u\geq 0\}$$
for the convex cone of non-negative functions in $D^{1,2}_0(\o)^G$ and, for $\alpha > 0$, we set 
$$B_{\alpha}(\cP^G):= \{u\in D^{1,2}_0(\o)^G:\mathrm{dist}(u,\cP^G)\leq\alpha\},$$
where $\mathrm{dist}(u,\cA):= \inf_{v \in \mathcal{A}} \|u-v\|_V$.

\begin{lemma}\label{lem:equator} There exists $\alpha >0$ such that
\begin{itemize}
\item [$(a)$] $[B_{\alpha} (\cP^G) \cup B_{\alpha}(- \cP^G)] \cap \cE^G = \emptyset$, and
\item [$(b)$] $B_{\alpha} ( \cP^G)$ and $B_{\alpha}(- \cP^G)$ are strictly positively invariant under $\eta$.
\end{itemize}
\end{lemma}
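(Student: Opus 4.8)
The plan is to establish the two statements by a direct estimate on $\mathrm{dist}(u,\pm\cP^G)$ combined with the quasi-positivity of the gradient flow. First I would prove $(a)$. Suppose $u\in B_\alpha(\cP^G)$, so there is $v\in\cP^G$ with $\|u-v\|_V\le\alpha$; then $\|u^-\|_V=\|\min\{u,0\}\|_V\le\|u-v\|_V\le\alpha$ because $v\ge 0$ pointwise forces $|u^-|\le|u-v|$ a.e. Using $(f_1)$ (so that $|f(s)s|\le A_1|s|^{2^*}$) and the Sobolev inequality, one gets $\int_\o f(u^-)u^-\ge -C\|u^-\|_V^{2^*}\ge -C\alpha^{2^{*}-2}\|u^-\|_V^2$ — more precisely, repeating the computation in the proof of Lemma \ref{lem:mountain pass}$(a)$ applied to $u^-$, one finds $\Psi(u^-):=\|u^-\|_V^2-\int_\o f(u^-)u^-\ge \tfrac12\|u^-\|_V^2$ once $\alpha$ is small enough. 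On the other hand, if $u$ also lay in $\cE(\o)^G$ then $u^-\in\cN(\o)^G$, hence $\Psi(u^-)=0$ and $\|u^-\|_V=0$, i.e. $u\ge 0$; but then Lemma \ref{lem:nehari}$(i)$ gives $\|u\|\ge\vr$, which together with $\|u^-\|_V=0$ does not contradict anything directly — so instead I argue that $u\in\cE(\o)^G$ requires $u^+\in\cN(\o)^G$ as well, and a nonzero $u^-\in\cN(\o)^G$ would need $\|u^-\|_V\ge\vr>0$ (again Lemma \ref{lem:nehari}$(i)$, since $\cN$-elements are bounded below), contradicting $\|u^-\|_V\le\alpha$ for $\alpha<\vr$. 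Thus choosing $\alpha<\vr$ already yields $B_\alpha(\cP^G)\cap\cE(\o)^G=\emptyset$; the symmetric statement for $B_\alpha(-\cP^G)$ follows from Lemma \ref{lem:equator}-type symmetry, i.e. $u\in B_\alpha(-\cP^G)$ iff $-u\in B_\alpha(\cP^G)$, and $\cE(\o)^G$ is symmetric by Lemma \ref{lem:N-E}$(i)$.

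Next I would prove $(b)$, the strict positive invariance. The key is a quasi-positivity estimate for the gradient: I claim there is $\alpha>0$ such that
$$\mathrm{dist}\big(u-\lambda\nabla I_V(u),\ \cP^G\big)\le (1-\lambda)\,\mathrm{dist}(u,\cP^G)+o\big((1-\lambda)\mathrm{dist}(u,\cP^G)\big)$$
uniformly for $u$ in a neighborhood of $\cP^G$ and small $\lambda>0$; equivalently, writing $\nabla I_V(u)=u-Q(u)$ with $Q$ defined by \eqref{eq:gradient}, one has $u-\lambda\nabla I_V(u)=(1-\lambda)u+\lambda Q(u)$, so it suffices to control $\mathrm{dist}(Q(u),\cP^G)$ by a fraction of $\mathrm{dist}(u,\cP^G)$. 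Testing \eqref{eq:gradient} with $v=Q(u)^-\in\cP^G\cap(-\cP^G)$ (or rather $v=-Q(u)^-\le 0$ is \emph{not} admissible; I test with $v=Q(u)^-$ and note $\langle Q(u),Q(u)^-\rangle_V=\|Q(u)^-\|_V^2$), one gets
$$\|Q(u)^-\|_V^2=\io f(u)Q(u)^-=\io f(u^-)Q(u)^-\le C\|u^-\|_{2^*}^{2^*-1}\|Q(u)^-\|_{2^*},$$
where I used that $Q(u)^-\le 0$ and $f$ is increasing with $f(0)=0$ so $f(u)Q(u)^-\le f(u^-)Q(u)^-$ pointwise, then $(f_1)$. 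Sobolev's inequality and $\|u^-\|_V\le\mathrm{dist}(u,\cP^G)$ give $\mathrm{dist}(Q(u),\cP^G)\le\|Q(u)^-\|_V\le C\,\mathrm{dist}(u,\cP^G)^{2^*-1}$, which is $o(\mathrm{dist}(u,\cP^G))$ as the distance shrinks; hence on $B_\alpha(\cP^G)$ with $\alpha$ small, $\mathrm{dist}(u-\lambda\nabla I_V(u),\cP^G)\le(1-\lambda+C\alpha^{2^*-2})\,\mathrm{dist}(u,\cP^G)<\mathrm{dist}(u,\cP^G)$ for all small $\lambda>0$, so the discrete flow strictly decreases the distance; a standard Gronwall / comparison argument along the flow line $t\mapsto\eta(t,u)$ then upgrades this to $\mathrm{dist}(\eta(t,u),\cP^G)<\mathrm{dist}(u,\cP^G)\le\alpha$ for all $t\in(0,T(u))$ whenever $u\in B_\alpha(\cP^G)$, with strict inequality placing $\eta(t,u)$ in the interior. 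The case $B_\alpha(-\cP^G)$ follows by the symmetry $\eta(t,-u)=-\eta(t,u)$ (the flow is odd because $I_V$ is even by $(f_3)$, so $\nabla I_V$ is odd).

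The main obstacle is the quasi-positivity estimate for $Q(u)$, i.e. showing that applying one step of the negative gradient flow pushes a function closer to the cone $\cP^G$ — this is where the monotonicity $f(u)Q(u)^-\le f(u^-)Q(u)^-$ and the sign condition $f(0)=0$ (a consequence of $(f_2)$, which forces $f(s)s>0$ for $s\ne0$ hence $f(0)=0$) are essential, and one must be careful that $Q(u)$ need not itself be $G$-invariant-positive even when $u$ is close to $\cP^G$, so the estimate genuinely uses the structure of \eqref{eq:gradient} rather than a pointwise maximum-principle argument. A secondary technical point is passing from the discrete-step contraction to the continuous flow while keeping the strictness (so that the image lands in the \emph{interior} of $B_\alpha(\cP^G)$, not merely in $B_\alpha(\cP^G)$); this is handled by noting that $t\mapsto\mathrm{dist}(\eta(t,u),\cP^G)$ is locally Lipschitz and satisfies a differential inequality $\frac{d}{dt}\mathrm{dist}(\eta(t,u),\cP^G)\le -\mathrm{dist}(\eta(t,u),\cP^G)+o(1)$-type bound forcing strict decrease for $t>0$. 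I would cite the analogous arguments in \cite{ccn} and \cite{cw} for the details of these two points, as the paper already signals that Lemma \ref{lem:equator} is modeled on that literature.
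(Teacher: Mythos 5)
Your argument hinges, in both parts, on the inequality $\|u^-\|_V\le\|u-v\|_V$ for every $v\in\cP^G$, deduced from the pointwise bound $|u^-|\le|u-v|$ a.e. This step is false: the norm $\|\cdot\|_V$ involves gradients, and pointwise domination of functions gives no control of $\|\nabla u^-\|_2$ by $\|\nabla(u-v)\|_2$ (already in one dimension one can choose $v\ge 0$ with $\|u-v\|_{H^1}<\|u^-\|_{H^1}$ by perturbing $v=u^+$ near the interface of the supports of $u^+$ and $u^-$; in general $\mathrm{dist}_V(u,\cP^G)$ is strictly smaller than $\|u^-\|_V$). Pointwise domination only controls Lebesgue norms, and this is precisely why the paper's proof works with $|u^-|_{2^*}=\inf_{v\in\cP^G}|u-v|_{2^*}\le C\,\mathrm{dist}_V(u,\cP^G)$ (inequality \eqref{eq:dist}) rather than with $\|u^-\|_V$. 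Concretely: in your part $(a)$ the contradiction ``$\|u^-\|_V\le\alpha<\vr$ versus $\|u^-\|_V\ge\vr$'' collapses, and the correct route is to convert the Nehari bound via $(f_1)$, namely $\vr^2\le\|u^-\|_V^2=\int_\o f(u^-)u^-\le A_1|u^-|_{2^*}^{2^*}\le A_1C^{2^*}\mathrm{dist}_V(u,\cP^G)^{2^*}$, which gives a uniform positive lower bound on $\mathrm{dist}_V(u,\cP^G)$ for $u\in\cE(\o)^G$; in your part $(b)$ the step ``Sobolev and $\|u^-\|_V\le\mathrm{dist}(u,\cP^G)$'' must likewise be replaced by $|u^-|_{2^*}\le C\,\mathrm{dist}_V(u,\cP^G)$ to obtain $\mathrm{dist}(Q(u),\cP^G)\le C\,\mathrm{dist}(u,\cP^G)^{2^*-1}$. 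With this repair your estimate for $Q$ coincides with the paper's.

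Apart from that, your treatment of the flow differs from the paper's and is only sketched. The paper deduces positive invariance of the closed convex set $B_\alpha(\cP^G)$ from $Q(B_\alpha(\cP^G))\subset\mathrm{int}\,B_\alpha(\cP^G)$ via Deimling's invariance theorem, and obtains strictness by a separation (Mazur) argument at a hypothetical boundary point; you instead propose a Dini-derivative/Gronwall differential inequality for $t\mapsto\mathrm{dist}(\eta(t,u),\cP^G)$. That route can be made to work (using convexity of the distance to a convex set, one gets $\mathrm{dist}(\eta(t+h,u),\cP^G)\le(1-h)\,\mathrm{dist}(\eta(t,u),\cP^G)+h\,\mathrm{dist}(Q(\eta(t,u)),\cP^G)+o(h)$, hence exponential decay and $\mathrm{dist}<\alpha$, which does place the trajectory in the interior), but as written it is an appeal to ``standard arguments'' rather than a proof, and also note the small slip $(1-\lambda+C\alpha^{2^*-2})$ should be $(1-\lambda+\lambda C\alpha^{2^*-2})$. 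So: same core estimate as the paper once the distance inequality is corrected, a genuinely different (but incomplete) argument for the invariance and strictness.
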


\begin{proof} 
$(a):$ \ For any $u \in D^{1,2}_0(\o)^G$ the Sobolev inequality yields a positive constant $C$ such that
\begin{equation} \label{eq:dist}
|u^{-}|_{2^*} = \inf_{v \in \mathcal{P}^G} |u-v|_{2^*} \leq C \inf_{v \in \cP^G} \|u-v \|_V = C \,\mathrm{dist}(u, \mathcal{P}^G).
\end{equation}
If $u\in\cE^G$, then $u^-\in \cN^G$ and, by Lemma \ref{lem:nehari}$(i)$ and assumption $(f_1)$, 
$$0<\vr^2\leq\|u^-\|_V^2 = \io f(u^-)u^- \leq A_1\io|u^-|^{2^*} = A_1|u^-|_{2^*}^{2^*}.$$
Hence, there exists $\alpha>0$ such that $\alpha < \mathrm{dist}(u,\cP^G )$ for all $u \in \cE^G$. \ Since $\cE^G$ is symmetric, this implies that $\alpha < \mathrm{dist}(u,-\cP^G )$ for all $u \in \cE^G$.

$(b):$ \ Using \eqref{eq:gradient}, $(f_2)$, $(f_1)$, the Hölder and the Sobolev inequalities, and \eqref{eq:dist} we obtain
\begin{align*}
\mathrm{dist} (Q(u), \cP^G) \|Q(u)^{-}\|_V &\leq \|Q(u)^{-}\|_V^2 = \langle Q(u), Q(u)^- \rangle_V = \int_{\Omega} f(u) Q(u)^- \\
&= \int_{u>0} f(u) Q(u)^- + \int_{u<0} f(u) Q(u)^- \leq \int_{\Omega} f(u^-) Q(u)^-\\
&\leq A_1\int_{\Omega} |u^-|^{2^*-1} Q(u)^{-} \leq A_1|u^-|_{2^*}^{2^*-1}|Q(u^-)|_{2^*}\\
&=C\,\mathrm{dist}(u, \cP^G )^{2^*-1} \|Q(u)^-\|_V.
\end{align*}
If $Q(u)^-\neq 0$, then 
$$\text{dist}(Q(u),\cP^G) \leq C\, \mathrm{dist}(u,\cP^G)^{2^*-1}. $$
So, setting $\alpha < \frac{1}{2}C^{-1/(2^*-2)}$ and $\delta:=(\frac{1}{2})^{2^*-2} \in (0,1)$, we have that
$$\mathrm{dist} (Q(u),\cP^G) \leq \delta\,\mathrm{dist} (u, \cP^G) \qquad \text{for every \ } u \in B_{\alpha}(\cP^G).$$
It follows that $Q(u)\in\mathrm{int}(B_{\alpha}(\cP^G))$ if $u\in B_{\alpha}(\cP^G).$ Since $B_\alpha(\cP^G)$ is closed and convex, applying \cite[Theorem 5.2]{d} we conclude that
\begin{equation} \label{eq:invariance}
\eta(t,u) \in B_\alpha(\cP^G) \quad \text{for every }u\in B_\alpha(\cP^G)\text{ \ and \ } t \in [0,T(u)).
\end{equation}
Now, arguing by contradiction, assume that $\eta(t,u) \in \partial(B_{\alpha}(\cP^G))$ for some $u \in B_{\alpha}(\cP^G)$ and $t \in (0,T(u))$. Then, by Mazur's separation theorem, there exist a continuous linear functional $\mathscr{L} \in (D^{1,2}_0(\o)^G)'$ and $\beta >0$ such that $\mathscr{L}(\eta(t,u))= \beta$ and $\mathscr{L}(u)> \beta$ for every $u \in \text{int}(B_{\alpha}(\mathcal{P}^G))$.
It follows that
$$\frac{d}{dt} \Big|_{s=t}\mathscr{L}(\eta(s,u))=\mathscr{L}(- \nabla I_V(\eta(t,u)))= \mathscr{L}(Q(\eta(t,u))) - \beta >0.$$
Hence, there exists $\eps>0$ such that $\mathscr{L} (\eta(s,u))<\beta$ for every $s \in( t-\eps,t)$. Then, $\eta(s,u) \notin B_{\alpha}(\cP^G)$, which is a  contradiction. This proves that $\eta(t,u) \in \mathrm{int}(B_{\alpha}(\cP^G))$ for every $u\in B_\alpha(\cP^G)$ and $0<t < T(u)$, as claimed.
\end{proof}

Fix $\alpha$ as in the previous lemma. Given $d\in\r$, set $I^d_V:=\{u\in D^{1,2}_0(\o)^G:J(u)\leq d\}$ and define
\begin{equation*}
\cZ_d^G:=B_{\alpha} (\cP^G) \cup B_{\alpha}(- \cP^G)\cup I^d_V.
\end{equation*}
Lemma \ref{lem:equator} yields the following result.

\begin{corollary} \label{cor:retraction}
If $I_V$ does not have a sign-changing critical point $u \in D^{1,2}_0(\o)^G$ with $I_V(u)=d$, then the set $\cZ_d^G$ is strictly positively invariant under $\eta$, and the map
$$\varrho_d:\cA(\cZ_d^G)\to\cZ_d^G, \qquad \varrho_d(u)= \eta(e_{\cZ_d^G} (u),u),$$
is odd and continuous, and satisfies $\varrho_d(u)=u$ for every $u\in\cZ_d^G$.
\end{corollary}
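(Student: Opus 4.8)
The plan is to deduce everything from Lemma \ref{lem:equator} together with the standard deformation machinery for the negative gradient flow $\eta$. First I would check that $\cZ_d^G$ is strictly positively invariant under $\eta$ when $I_V$ has no sign-changing critical point at level $d$. The set $\cZ_d^G$ is the union of three sets: $B_\alpha(\cP^G)$ and $B_\alpha(-\cP^G)$, which are strictly positively invariant by Lemma \ref{lem:equator}$(b)$, and the sublevel set $I_V^d$. For a point $u\in\cZ_d^G$ and $t\in(0,T(u))$ I distinguish cases: if $u$ lies in $B_\alpha(\pm\cP^G)$, then $\eta(t,u)\in\mathrm{int}(B_\alpha(\pm\cP^G))\subset\mathrm{int}(\cZ_d^G)$ directly. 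If $u\in I_V^d$, then since $I_V$ is nonincreasing along the flow and $\frac{\d}{\d t}I_V(\eta(t,u))=-\|\nabla I_V(\eta(t,u))\|_V^2$, the trajectory stays in $I_V^d$; moreover either $\eta(t,u)$ remains at level exactly $d$ for all small $t$ — which forces $\eta(0,u)=u$ to be a critical point at level $d$, and by the case analysis of where it sits relative to $B_\alpha(\pm\cP^G)$ together with Lemma \ref{lem:equator}$(a)$ (the only critical points at level $d$ in $B_\alpha(\pm\cP^G)$ are not sign-changing, so by hypothesis there is no critical point at level $d$ outside these sets either), giving a contradiction unless $u\in B_\alpha(\pm\cP^G)$, which is the previous case — or $I_V(\eta(t,u))<d$ for $t>0$ small, so $\eta(t,u)\in\mathrm{int}(I_V^d)\subset\mathrm{int}(\cZ_d^G)$. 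In either subcase $\eta(t,u)\in\mathrm{int}(\cZ_d^G)$, so $\cZ_d^G$ is strictly positively invariant.

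Next, granted strict positive invariance, the general facts recorded just before Lemma \ref{lem:equator} apply verbatim: the set $\cA(\cZ_d^G)$ is open in $D^{1,2}_0(\o)^G$ and the entrance time map $e_{\cZ_d^G}:\cA(\cZ_d^G)\to\r$ is continuous. Consequently the map $\varrho_d(u)=\eta(e_{\cZ_d^G}(u),u)$ is a composition of continuous maps $u\mapsto(e_{\cZ_d^G}(u),u)\mapsto\eta(e_{\cZ_d^G}(u),u)$ and is therefore continuous on $\cA(\cZ_d^G)$; its image lies in $\cZ_d^G$ because at time $e_{\cZ_d^G}(u)$ the trajectory has, by definition of the infimum and the fact that $\cZ_d^G$ is positively invariant (hence the entrance time is attained), entered $\cZ_d^G$. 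If $u\in\cZ_d^G$ already, then $e_{\cZ_d^G}(u)=0$ and $\eta(0,u)=u$, so $\varrho_d(u)=u$.

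It remains to check that $\varrho_d$ is odd, and this is where the symmetry must be used carefully. The functional $I_V$ is even by assumption $(f_3)$, so $\nabla I_V(-u)=-\nabla I_V(u)$ and hence the flow commutes with the antipodal map: $\eta(t,-u)=-\eta(t,u)$ and $T(-u)=T(u)$. Since $\cZ_d^G$ is symmetric — $B_\alpha(\cP^G)$ and $B_\alpha(-\cP^G)$ are interchanged by $u\mapsto-u$, and $I_V^d$ is symmetric because $I_V$ is even — it follows that $-u\in\cZ_d^G$ iff $u\in\cZ_d^G$, whence $e_{\cZ_d^G}(-u)=e_{\cZ_d^G}(u)$ and $\cA(\cZ_d^G)$ is symmetric. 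Therefore
$$\varrho_d(-u)=\eta(e_{\cZ_d^G}(-u),-u)=\eta(e_{\cZ_d^G}(u),-u)=-\eta(e_{\cZ_d^G}(u),u)=-\varrho_d(u),$$
so $\varrho_d$ is odd. The main obstacle in this argument is the first step: correctly ruling out, in the strict-positive-invariance verification, the possibility that a trajectory starting on the boundary layer $\{I_V=d\}\smallsetminus B_\alpha(\pm\cP^G)$ fails to immediately decrease — this is exactly where the hypothesis that there is no sign-changing critical point at level $d$ is essential, combined with Lemma \ref{lem:equator}$(a)$ to locate any non-sign-changing critical point at that level safely inside $B_\alpha(\pm\cP^G)$.
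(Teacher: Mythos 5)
Your argument is correct and takes essentially the approach the paper intends: the corollary is stated there as a direct consequence of Lemma \ref{lem:equator} together with the general facts about strictly positively invariant sets and the entrance time map recorded before that lemma, which is exactly what you verify (strict positive invariance of $\cZ_d^G$ via the case analysis at level $d$, then continuity of $e_{\cZ_d^G}$ and oddness of the flow). One cosmetic remark: in the critical-point case the appeal to Lemma \ref{lem:equator}$(a)$ is not needed; it suffices to note that, by hypothesis, a critical point at level $d$ does not change sign and hence lies in $\cP^G\cup(-\cP^G)\subset B_\alpha(\cP^G)\cup B_\alpha(-\cP^G)$, a case already covered by Lemma \ref{lem:equator}$(b)$.
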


We introduce a suitable topological invariant as follows.

\begin{definition}\label{def:genus}
Let $\cZ\subset\cY$ be symmetric subsets of $D^{1,2}_0(\o)^G$. The \emph{genus of $\cY$ relative to $\cZ$}, denoted $\gen(\cY,\cZ)$, is the smallest number $m$ such that $\cY$ can be covered by $m+1$ open symmetric subsets $\cU_0$, $\cU_1, \ldots \cU_m$ of $D^{1,2}_0(\o)^G$ with the following properties:
\begin{itemize}
\item [$(i)$] $\cZ\subset\cU_0$ and there exists an odd continuous map $\vartheta_0 : \cU_0\to\cZ$ such that $\vartheta_0(u)=u$ for all $u\in\cZ$.
\item [$(ii)$] There exist odd continuous maps $\vartheta_j: \cU_j \to\{-1,1\}$ for each $j=1,\ldots,m.$
\end{itemize}
If no such cover exists, we set $\gen(\cY,\cZ):=\infty$.
\end{definition}

Recall that $I_V:D^{1,2}_0(\o)^G\to\r$ is said to satisfy $(PS)_c$ if every sequence $(v_k)$ in $D^{1,2}_0(\o)^G$ such that
$$I_V(v_k)\to c\qquad\text{and}\qquad \nabla I_V(u_k)\to 0,$$
contains a convergent subsequence.

\begin{theorem} \label{thm:genus}
Let $d\geq 0$. If $I_V$ satisfies $(PS)_c$ at every $c\leq d$, then $I_V$ has at least $\gen(\cZ_d^G,\cZ_0^G)$ pairs of sign-changing critical points $\pm v\in D^{1,2}_0(\o)^G$ with $I_V(v)\leq d$.
\end{theorem}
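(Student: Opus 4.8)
The plan is to run a standard deformation/minimax argument over the relative genus, using the negative gradient flow $\eta$ and the odd retraction $\varrho_d$ from Corollary \ref{cor:retraction}. We argue by contradiction: suppose $I_V$ has fewer than $\gen(\cZ_d^G,\cZ_0^G)$ pairs of sign-changing critical points with critical value $\le d$, and derive that $\cZ_d^G$ can be covered by too few symmetric open sets, contradicting the definition of the relative genus.

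First I would record the deformation lemma in the present setting. Since $I_V$ satisfies $(PS)_c$ for all $c\le d$, and $I_V\ge 0$ on $\cB_0$, only finitely many critical values lie in $[0,d]$; let $K$ be the (finite, symmetric, compact after quotienting by $\pm$) set of sign-changing critical points with value in $[0,d]$. Using the flow $\eta$ and the fact that $B_\alpha(\pm\cP^G)$ are strictly positively invariant (Lemma \ref{lem:equator}), one obtains the following: for a suitable symmetric open neighbourhood $\cO$ of $K$, the set $\cZ_d^G$ deforms, oddly and equivariantly, into $\cZ_{d-\varepsilon}^G\cup\cO$ for some $\varepsilon>0$; indeed the flow never leaves $B_\alpha(\pm\cP^G)$ once it enters, so a trajectory starting in $\cZ_d^G\setminus\cO$ either stays in $B_\alpha(\pm\cP^G)$ or its energy drops below $d-\varepsilon$ in finite time. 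This is the technical heart and I expect it to be the main obstacle: one must ensure the deformation is odd (use the $\z_2$-equivariance of $\eta$, which follows from oddness of $f$ via $(f_3)$, so that $Q$ and hence $\nabla I_V$ are odd), that $\cO$ can be chosen $\z_2$-invariant and free of the fixed-point subspace (automatic since $0\notin K$ and $K\cap[B_\alpha(\cP^G)\cup B_\alpha(-\cP^G)]=\emptyset$ by Lemma \ref{lem:equator}$(a)$), and that the flow does not blow up before reaching the lower sublevel (handled by $(PS)$ and Lemma \ref{lem:ps bounded}, which bound the flow on bounded energy intervals).

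Next, the covering count. Suppose $K$ consists of $n$ pairs $\pm v_1,\dots,\pm v_n$. Each pair $\{v_i,-v_i\}$ lies in the complement of the fixed-point set of the $\z_2$-action, so a small symmetric neighbourhood $\cO_i$ of $\{\pm v_i\}$ admits an odd map $\vartheta_i:\cO_i\to\{-1,1\}$ (send a neighbourhood of $v_i$ to $+1$ and of $-v_i$ to $-1$); take $\cO=\bigcup_i\cO_i$ and use these $\vartheta_i$ as $n$ of the required maps. By the deformation above composed with the retraction $\varrho_{d-\varepsilon}$ and an induction on critical values descending from $d$ to $0$, one builds an odd continuous map $\cZ_d^G\setminus(\cO_1\cup\cdots\cup\cO_n)\to\cZ_0^G$ fixing $\cZ_0^G$; extend it to an open symmetric neighbourhood $\cU_0$ of that set. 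Then $\cU_0,\cO_1,\dots,\cO_n$ is an admissible cover in the sense of Definition \ref{def:genus}, so $\gen(\cZ_d^G,\cZ_0^G)\le n$. This contradicts the assumption $n<\gen(\cZ_d^G,\cZ_0^G)$, so $I_V$ must have at least $\gen(\cZ_d^G,\cZ_0^G)$ pairs of sign-changing critical points with $I_V(v)\le d$, which is exactly the claim.

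I would organize the write-up as: (1) reduce to finitely many critical values and set up $K$; (2) state and prove (or cite from \cite{cp,cw,d}) the equivariant deformation of $\cZ_d^G$ into $\cZ_{d-\varepsilon}^G\cup\cO$, invoking strict positive invariance of $B_\alpha(\pm\cP^G)$ to keep the deformation inside $\cZ_d^G$; (3) iterate over the finitely many critical levels to deform $\cZ_d^G\setminus\cO$ into $\cZ_0^G$ while fixing $\cZ_0^G$; (4) assemble the cover and read off the genus inequality; (5) conclude by contradiction. The only genuinely delicate points are the oddness/equivariance bookkeeping of the flow and the handling of incomplete trajectories (maximal existence time $T(u)$), both of which are standard once $(PS)_c$ is in hand, and the use of Corollary \ref{cor:retraction} to absorb the $B_\alpha(\pm\cP^G)$ part of $\cZ_d^G$ into $\cU_0$.
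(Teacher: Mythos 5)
Your overall strategy is the same as the one the paper relies on: its proof of Theorem \ref{thm:genus} is a one-line reference to \cite[Proposition 3.6]{cp}, whose argument is exactly the contradiction scheme you describe --- finitely many pairs of sign-changing critical points, an odd deformation along the negative gradient flow absorbed by the strictly positively invariant sets of Lemma \ref{lem:equator} and Corollary \ref{cor:retraction}, and an admissible cover that bounds $\gen(\cZ_d^G,\cZ_0^G)$ by the number of pairs. However, two steps do not hold as you state them, and the second is where your write-up actually breaks. First, $(PS)_c$ for $c\le d$ does not give finitely many critical values in $[0,d]$; it only gives compactness of the critical set. This is harmless here, because the finiteness you need --- of the \emph{sign-changing} critical set --- is the contradiction hypothesis itself, and one-signed critical points never obstruct the descent: a trajectory accumulating at a point of $\pm\cP^G$ enters $B_\alpha(\pm\cP^G)\subset\cZ_c^G$ in finite time.

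The serious issue is the cover $\cU_0,\cO_1,\dots,\cO_n$. The set $\cZ_d^G\smallsetminus(\cO_1\cup\cdots\cup\cO_n)$ contains points whose trajectories are attracted to some $\pm v_i$ (they merely \emph{start} outside the small balls); such points never reach $\cZ_0^G$ under the flow, so the odd map $\vartheta_0$ ``flow into $\cZ_0^G$'' is undefined on them, and composing with $\varrho_{d-\eps}$ does not help, since the deformation of $\cZ_d^G$ lands in $\cZ_{d-\eps}^G\cup\cO$, not in $\cZ_{d-\eps}^G$ (for the same reason your claim that a trajectory starting outside $\cO$ either stays in $B_\alpha(\pm\cP^G)$ or drops below $d-\eps$ fails for the raw flow). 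The standard repair, which is what the cited proof does, is to take the exceptional sets to be flow-preimages rather than the balls themselves: put $\cU_0:=\cA(\cZ_0^G)$ with $\vartheta_0:=\varrho_0$ (legitimate because $c(\o)^G>0$, so there is no sign-changing critical point at level $0$), and $\cU_i:=\{u:\eta(t,u)\in B(v_i,r)\cup B(-v_i,r)\text{ for some }t\in[0,T(u))\}$, which is open and symmetric; for $r$ small a $(PS)$-based energy-drop estimate prevents one trajectory from visiting both balls of a pair, so ``which ball is hit first'' yields the odd map to $\{\pm 1\}$. Then $\cZ_d^G\subset\cU_0\cup\cU_1\cup\cdots\cup\cU_n$, because a trajectory that never enters $\cZ_0^G$ must, by $(PS)$, accumulate at a sign-changing critical point, hence enter some ball. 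Alternatively, run your descending induction as a pullback, getting $\gen(\cZ_{c+\eps}^G,\cZ_0^G)\le\gen(\cZ_{c-\eps}^G,\cZ_0^G)+\#\{\text{pairs at level }c\}$ by taking preimages under the deformation of a minimal cover and of the ball-pairs. With either correction your contradiction argument closes; without it, the map $\vartheta_0$ you need does not exist on the set to which you assign it.
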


\begin{proof}
The proof is formally identical to that of \cite[Proposition 3.6]{cp}, using now Corollary \ref{cor:retraction}.
\end{proof}

We are ready to state the main result of this section.

\begin{theorem} \label{thm:nodal principle}
Let $\Lambda$ be a bounded $G$-invariant open subset of $\o$ and $W$ be a finite dimensional linear subspace of $D^{1,2}_0(\Lambda)^G$. If $I_V$ satisfies $(PS)_c$ for every $c\leq \sup_{u\in W}I_V(u)$, then $I_V$ has at least $m:=\dim(W)$ pairs of critical points $\pm u_1,\ldots,\pm u_m$ in $D^{1,2}_0(\o)^G$ such that $u_1>0$, $u_j\in\cE(\o)^G$ if $j=2,\ldots,m$, 
$$I_V(u_1)=c(\o)^G\qquad\text{and}\qquad I_V(u_j)\leq \sup_{u\in W}I_V(u)\quad\text{for all \ }j=1,\ldots,m.$$
\end{theorem}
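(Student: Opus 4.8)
The plan is to combine the two tools built in this section: the existence of a positive least-energy solution on the Nehari manifold, and the relative-genus estimate of Theorem~\ref{thm:genus}. First I would dispose of the positive solution $u_1$: minimize $I_V$ over $\cN(\o)^G$. By Lemma~\ref{lem:nehari} this infimum $c(\o)^G$ is positive and attained along a $(PS)$ sequence at level $c(\o)^G$, and since $c(\o)^G \leq \sup_{u\in W} I_V(u)$ (because $W \subset D^{1,2}_0(\Lambda)^G$ gives functions whose Nehari rescalings lie in $\cN(\o)^G$ with energy at most $\sup_W I_V$, using Lemma~\ref{lem:nehari}$(vi)$ and Lemma~\ref{lem:mountain pass}), the $(PS)_c$ hypothesis yields a minimizer, which by the usual argument (passing to $|u_1|$ and applying the strong maximum principle / Harnack) may be taken strictly positive. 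Thus $\pm u_1$ is one pair, with $u_1>0$ and $I_V(u_1)=c(\o)^G$.

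For the remaining $m-1$ pairs of sign-changing solutions, the key is to show
$$\gen\big(\cZ_d^G,\cZ_0^G\big)\geq m-1, \qquad\text{where } d:=\sup_{u\in W}I_V(u),$$
so that Theorem~\ref{thm:genus} produces $m-1$ pairs of sign-changing critical points with energy $\leq d$; relabel these $\pm u_2,\dots,\pm u_m$. So the whole argument reduces to the genus lower bound, and this is the step I expect to be the main obstacle. The idea, following \cite{cp}, is to exhibit a ``large'' symmetric subset of $\cZ_d^G$ and use monotonicity and normalization properties of the relative genus. Concretely, consider the $G$-invariant sphere $S_W:=\{w\in W:\|w\|_V=R\}$ for $R$ large (from Lemma~\ref{lem:mountain pass}$(b)$, so that $I_V\leq 0$ on $W$ outside $\|w\|_V<R$); then the cone over $S_W$, suitably rescaled via the Nehari projection $t_u$, gives an odd continuous map of $W\smallsetminus\{0\}$ into a region where $I_V$ takes values $\leq d$ except near the two ``poles'' $\pm\cP^G$. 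More precisely, the suspension of $S_W$ (topologically an $m$-sphere $\mathbb S^m$ with the two antipodal caps around $\pm\cP^G$ removed and replaced by pieces of $B_\alpha(\pm\cP^G)$) maps oddly into $\cZ_d^G$; since the relative genus is a homotopy/monotonicity invariant that is normalized so that $\gen$ of such a configuration equals $m-1$ (the removal of the $\pm\cP^G$ caps costs exactly one dimension), one obtains $\gen(\cZ_d^G,\cZ_0^G)\geq m-1$. The subtlety is to show that the genus does not drop when passing from $I_V^d$ to the enlarged set $\cZ_d^G$, and here one uses that $\cZ_0^G$ already contains $B_\alpha(\pm\cP^G)$, so the ``relative'' nature of the invariant absorbs the poles consistently at both levels $0$ and $d$.

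In executing this I would first fix $R$ via Lemma~\ref{lem:mountain pass}$(b)$ and $\alpha$ via Lemma~\ref{lem:equator}, and verify that the radial projection $w\mapsto t_w w$ (for $w\in W\smallsetminus\{0\}$, with $t_w$ from Lemma~\ref{lem:nehari}$(v)$) is odd and continuous and carries $W\smallsetminus\{0\}$ onto $\cN(\o)^G\cap W$ with $I_V(t_w w)=\max_{t>0}I_V(tw)\leq d$ by Lemma~\ref{lem:nehari}$(vi)$ and the choice of $d$. The genuinely delicate point, which I would handle exactly as in \cite[Proposition~3.6 and its preliminaries]{cp}, is the lower bound for $\gen(\cZ_d^G,\cZ_0^G)$: assuming a cover $\cU_0,\dots,\cU_{m-2}$ as in Definition~\ref{def:genus} existed, one restricts the maps $\vartheta_0,\dots,\vartheta_{m-2}$ to the image of $S_W$ and derives, via a Borsuk--Ulam-type argument on $\mathbb S^m$ (using that $\vartheta_0$ retracts onto $\cZ_0^G$ which is ``$\mathbb Z_2$-contractible away from the equator''), a contradiction with $\dim W=m$. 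Combining the positive solution $\pm u_1$ at level $c(\o)^G$ with the $m-1$ sign-changing pairs at levels $\leq d$ gives the asserted $m$ pairs. I would note that the $(PS)_c$ hypothesis at all $c\leq d$ is what legitimizes applying Theorem~\ref{thm:genus} at level $d$ and the minimization at level $c(\o)^G\leq d$.
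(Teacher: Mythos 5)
Your overall architecture coincides with the paper's: minimize on $\cN(\o)^G$ to obtain the positive pair $\pm u_1$ with $I_V(u_1)=c(\o)^G\le d:=\sup_W I_V$, then reduce the sign-changing multiplicity to the bound $\gen(\cZ_d^G,\cZ_0^G)\ge m-1$ and invoke Theorem \ref{thm:genus}. The gap is that you never actually prove this genus bound, and the mechanism you sketch would not deliver it. The relative genus of Definition \ref{def:genus} is an ad hoc invariant; there is no ``normalization'' result computing it for a ``suspension of $S_W$ with caps replaced by pieces of $B_\alpha(\pm\cP^G)$'', so asserting that such a configuration has genus $m-1$ is circular --- establishing exactly this kind of lower bound \emph{is} the step. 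Moreover, restricting a putative cover $\cU_0,\dots,\cU_{m-2}$ to $S_W$ (or to its Nehari projection $\{t_w w\}$) cannot produce a contradiction: on $S_W$ one has $I_V\le 0$, so $S_W\subset I_V^0\subset\cZ_0^G\subset\cU_0$ and $\vartheta_0$ is the identity there; and even on the Nehari-projected sphere, $\vartheta_0$ only maps into $\cZ_0^G$, which is a \emph{connected} set (both $B_\alpha(\cP^G)$ and $B_\alpha(-\cP^G)$ contain $0$), so no odd $\{\pm1\}$-valued labeling --- the input Borsuk--Ulam needs --- arises from it. Nowhere do you use Lemma \ref{lem:N-E}$(ii)$, and that is the essential ingredient. (Minor point: the lower-bound argument is the analogue of \cite[Theorem 3.7]{cp}; \cite[Proposition 3.6]{cp} corresponds to Theorem \ref{thm:genus}.)

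What is actually needed (and what the paper does): extend $\vartheta_0$ by Tietze to an odd map $\what\vartheta_0$ on $D^{1,2}_0(\o)^G$ and set $\cO:=\{w\in W:\what\vartheta_0(w)\in\cB_0\}$, with $\cB_0$ the component of the complement of the Nehari manifold from \eqref{eq:complement of nehari}. Then $\cO$ is an open symmetric neighborhood of $0$ in $W$, and it is bounded by Lemma \ref{lem:mountain pass}$(b)$ together with \eqref{eq:I>0 on B0}. For $w\in\partial\cO$ the image $\what\vartheta_0(w)$ lies on $\partial\cB_0\subset\cN(\o)^G$; combined with $\vartheta_0(\cU_0)\subset\cZ_0^G$, the fact that $I_V\ge c(\o)^G>0$ on $\cN(\o)^G$, and Lemma \ref{lem:equator}$(a)$, this forces the image into $\cN(\o)^G\smallsetminus\cE(\o)^G$, which by Lemma \ref{lem:N-E}$(ii)$ has exactly two components $\pm\cU$; composing gives an odd map $U_0:=\cU_0\cap\partial\cO\to\{\pm1\}$. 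Together with the $\vartheta_j$'s and an even partition of unity one builds an odd nonvanishing map $\partial\cO\to\r^{k+1}$, and Borsuk--Ulam applied on the boundary of the bounded symmetric neighborhood $\cO$ of $0$ in the $m$-dimensional space $W$ yields $m\le k+1$, i.e.\ $\gen(\cZ_d^G,\cZ_0^G)\ge m-1$. The contradiction thus comes from the crossing of the Nehari manifold detected by $\partial\cO$ in the interior of the ball in $W$, not from the sphere $S_W$ itself, where nothing happens. Without this construction (or an equivalent substitute), your argument for the $m-1$ sign-changing pairs is incomplete; the $u_1$ part and the reduction via Theorem \ref{thm:genus} are fine.
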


\begin{proof}
Let $m:=\dim(W)\geq 1$. By Lemma \ref{lem:nehari}, $W\cap\cN(\o)^G\neq\emptyset$. Hence, $d:=\sup_{u\in W}I_V(u)\geq c(\o)^G$. Therefore, $I_V$ satisfies $(PS)_c$ at $c=c(\o)^G$ and a standard argument shows that this value is attained at some positive function $u_1\in\cN(\o)^G$.

Next, adapting the argument in \cite[Theorem 3.7]{cp}, we will show that $k:=\gen(\cZ_d^G,\cZ_0^G)\geq m-1$. 

Let $\cU_0,\cU_1,\ldots,\cU_k$ be open symmetric subsets of $D^{1,2}_0(\o)^G$ such that $\cZ^G_d\subset\cU_0\cup\,\cU_1\cup\cdots\cup\,\cU_k$ and $\cU_0 \supset \cZ^G_0$, $\vartheta_0: \cU_0\to\cZ^G_0$ be an odd continuous map such that $\vartheta_0(u)=u$ for all $u \in \cZ^G_0$, and $\vartheta_j: \cU_j \to \{1,-1\}$ be odd continuous maps, $j =1,\ldots,k$. Making $\cU_0$ a little smaller, we may assume that $\vartheta_0$ is defined on the closure of $\cU_0$, and then extend it to an odd map $\what{\vartheta}_0: D^{1,2}_0(\o)^G\to D^{1,2}_0(\o)^G$ using Tietze's extension theorem.

Let $\cB_0$ be the connected component of the complement of $\cN(\o)^G$ that contains $0$, defined in \eqref{eq:complement of nehari}, and set
$$\cO:= \{w\in W:\what{\vartheta}_0(w)\in\cB_0\}.$$
Then $\cO$ is an open symmetric neighborhood of $0$ in $W$. Lemma \ref{lem:mountain pass} states that $I_V(w)\leq 0$ if $w\in W$ and $\|w\|_V\geq R$ for some $R>0$. Hence, if $\|w\|_V\geq R$, it follows from \eqref{eq:I>0 on B0} that $\what{\vartheta}_0(w)=w\notin \cB_0$. This shows that  $\cO$ is bounded. 

Define $U_j := \cU_j \cap \partial \cO$. Then $U_0, U_1, \ldots ,U_k$ are symmetric, open in $ \partial \cO$, and they cover $\partial \cO$. Moreover, by Lemma \ref{lem:equator},
$$\vartheta_0(U_0) \subset \cZ^G_0 \cap \cN(\o)^G \subset \cN(\o)^G \smallsetminus \cE(\o)^G.$$
It follows from Lemma \ref{lem:N-E}$(ii)$ that there exists an odd map $\eta : \cN(\o)^G \smallsetminus \cE(\o)^G \to\{1,-1\}$. Let $\eta_j:U_j \to\{1,-1\}$ be the restriction of the map $\eta_0 := \eta \circ \vartheta_0$ if $j=0$, and of $\eta_j := \vartheta_j,$ if $j= 1, \ldots,k$. Choose a partition of unity $\{\pi_j:\partial \mathcal{O} \to [0,1]: j =0,1, \ldots,k\}$ subordinated to the cover $\{U_0, U_1, \ldots,U_k \}$ consisting of even functions, and let $\{ e_1, \ldots, e_{k+1} \}$ be the canonical basis of $\r^{k+1}$. The map $\psi: \partial \mathcal{O} \rightarrow \r^{k+1}$ given by
$$\psi(u) = \sum_{j=0}^{k} \eta_j(u) \pi_j(u) e_{j+1}$$
is odd and continuous, and satisfies $\psi(u) \neq 0$ for every $u \in \partial \mathcal{O}$. By the Borsuk-Ulam theorem, $\dim(W) \leq k+1$, as claimed.
        
Applying Theorem \ref{thm:genus} we obtain $m-1$ pairs $\pm u_2,\ldots,\pm u_m$ of sign-changing critical points of $I_V$ with $I_V(u_j)\leq d$ for all $j=2,\ldots,m$. This completes the proof.
\end{proof}

\section{The proof of the main results}

\begin{proof}[Proof of Theorem \ref{thm:main1}]
Whether $\o$ is bounded or $\o$ is an exterior domain and $\#Gx=\infty$ for all $x\in\rn\smallsetminus\{0\}$, Theorem \ref{thm:compactness} states that $I_V$ satisfies $(PS)_c$ in $D^{1,2}_0(\o)^G$ for every $c\in\r$. As $D^{1,2}_0(\o)^G$ has infinite dimension, Theorem \ref{thm:nodal principle} yields $m$ pairs $\pm u_1,\ldots,\pm u_m$ of $G$-invariant solutions to \eqref{eq:problem} such that $u_1$ is positive, $I_V(u_1)=c(\o)^G$, and $u_2,\ldots,u_m$ change sign, for every $m\in\n$.
\end{proof}
\smallskip

\begin{proof}[Proof of Theorem \ref{thm:main2}]
Fix $R>0$ and set $\Theta := \R^N \smallsetminus B_R$. Let $\mathcal{P}_1$ be the set of all nonempty $O(N)$-invariant bounded domains (i.e., all open annuli) contained in $\Theta$ and, for each $k\geq 2$, let
\[
\mathcal{P}_k:=  \big\lbrace \{\Theta_1, \ldots, \Theta_k\}: \Theta_i \in \mathcal{P}_1, \ \Theta_i \cap \Theta_j = \emptyset  \text{ if } i \neq j \big\rbrace .
	\]
It follows from Theorem \ref{thm:main1} that in any $O(N)$-invariant domain $\Theta'\subset\Theta$ the problem
	\[
	 - \Delta u + V(x)u = f(u), \qquad u \in D^{1,2}_0(\Theta')^{O(N)},
	\] 
has a least energy solution $\omega_{\Theta'} \in \mathcal{N}(\Theta')^{O(N)} $. Define
	\[
	c_k:= \inf\left\lbrace \sum_{i=1}^{k}  I_V(\omega_{\Theta_i}): \{\Theta_1, \ldots , \Theta_k\} \in \mathcal{P}_k \right\rbrace.
	\]
As $\Theta_i \subset\Theta$, we have that $\cN(\Theta_i)^{O(N)}\subset\cN(\Theta)^{O(N)}$. Hence, $I_V(\omega_{\Theta_i})\geq I_V(\omega_\Theta)=:c_0>0$ and, as a consequence,
	\[
	c_{k-1} + c_0\leq \sum_{i=1}^{k} I_V(\omega_{\Theta_i})\quad\text{for every \ }\{\Theta_1, \ldots, \Theta_k\} \in \mathcal{P}_k, \ \ k \geq 2.
	\]
Therefore, $c_{k-1}<c_{k-1}+ c_0 \leq c_k$. Define
\begin{equation}\label{eq:ell}
\ell_k := c_{\infty}^{-1}c_k.
\end{equation}
Then, $\ell_{k-1}<\ell_k$. Let us show that these numbers have the required property.

	Fix $m \in \mathbb{N}$. Let $G$ be a closed subgroup of $O(N)$ and $\Omega$ be a $G$-invariant domain such that $\o\supset\Theta$ and
    \begin{equation}\label{condition_1}
  \ell_m < \min_{x\in\rn\smallsetminus\{0\}} \# Gx.
    \end{equation}
Then,
    \[
 c_m < \Big( \min_{x\in\rn\smallsetminus\{0\}} \# Gx \Big) c_{\infty}.
    \] 
	Given $\varepsilon \in (0, c_0)$ with $c_m + \varepsilon < \Big(\min\limits_{x\in\rn\smallsetminus\{0\}}\# Gx\Big) c_{\infty} $, we choose $\{\Theta_1, \ldots, \Theta_m\} \in \mathcal{P}_m$ such that
\begin{equation} \label{eq:choice of Thetas}
c_{m} \leq \sum_{i=1}^{m} I_V(\omega_{\Theta_i}) < c_m + \varepsilon.
\end{equation}
Set $\Lambda_m:=\Theta_1\cup\cdots\cup\Theta_m$. Then $\Lambda_m$ is a bounded $G$-invariant open subset of $\o$
and $\omega_{\Theta_i}\in\cN(\Theta_i)^{O(N)}\subset\cN(\Lambda_m)^G$.
For each $k=1,\ldots, m$, we consider the linear subspace of $D^{1,2}_0(\Lambda_m)^G$ given by
	\[
	W_k:= \left\{\sum_{i=1}^k t_i\omega_{\Theta_i}:t_i\in\r\right\}.
	\]
Since $\Theta_i \cap \Theta_j = \emptyset$ if $i \neq j$, we have that $\omega_{\Theta_i}$ and $\omega_{\Theta_j}$ are orthogonal in $D^{1,2}_0(\Lambda_m)^G$. Hence, $\dim (W_k)= k$. Furthermore, using Lemma \ref{lem:nehari}$(vi)$ we see that
$$I_V\Big(\sum_{i=1}^k t_i\omega_{\Theta_i}\Big)=\sum_{i=1}^k I_V(t_i\omega_{\Theta_i})\leq \sum_{i=1}^k I_V(\omega_{\Theta_i})\qquad\text{for all \ }t_1,\ldots,t_k\in\r.$$
Therefore,
$$d_k:= \sup_{W_k} I_V=\sum_{i=1}^k I_V(\omega_{\Theta_i})$$
and, from \eqref{eq:choice of Thetas} and our choice of $\eps$, we get
$$d_k<\Big(\min\limits_{x\in\rn\smallsetminus\{0\}}\# Gx\Big) c_{\infty}\qquad\text{for every \ }k=1,\ldots,m.$$
Then, Theorem \ref{thm:compactness} states that $I_V$ satisfies $(PS)_c$ in $D^{1,2}_0(\Omega)^G$ for all $c \leq d_k$. So, applying Theorem \ref{thm:nodal principle} to each $W_k$ successively, we obtain $m$ pairs of critical points $\pm u_1, \ldots, \pm u_m$ such that $u_1>0$, $u_j\in\cE(\Omega)^G$ if $j=2,\ldots,m$,
\begin{equation*}
I_V(u_1)=c(\o)^G \qquad\text{and}\qquad  I_V(u_k) \leq d_k \qquad \text{for each } k=1, \ldots,m.  
\end{equation*}
Note that, as $I_V(\omega_{\Theta_i})\geq c_0$,
\[
I_V(u_k)+(m-k)c_0\leq d_k + (m-k)c_0\leq  \sum_{i=1}^{m} I_V (  \omega_{\Theta_i} ) < c_m + \varepsilon.
\]
Since we have chosen $\varepsilon < c_0$, we derive
\begin{equation*}
   I_V(u_k) < c_m \quad \text{for each \ } k=1, \ldots, m-1,\qquad\text{and}\qquad I_V(u_m) < c_m+\eps. 
\end{equation*}
In fact, if $m=1$, then $I_V(u_1)=c(\o)^G\leq c(\Theta)^{O(N)}\leq c_1$. Next we prove that, if $m\geq 2$, we can choose $u_m\in\cE(\Omega)^G$ such that
\begin{equation*}
 I_V(u_m) \leq c_m . 
\end{equation*}
Let $\varepsilon_n \in (0,c_0)$ be such that $c_m + \varepsilon_n < \Big(\min\limits_{x\in\rn\smallsetminus\{0\}}\# Gx\Big) c_{\infty} $ and $\varepsilon_n \rightarrow 0$, and let $u_{m,n}\in\cE(\Omega)^G$ be the $m$-th critical point obtained by applying the previous argument with $\varepsilon = \varepsilon_n$. Then, 
$$I_V(u_{m,n}) < c_m + \varepsilon_n< \Big(\min_{x\in\rn\smallsetminus\{0\}}\# Gx\Big) c_{\infty}.$$
If $I_V(u_{m,n}) \leq c_m$ for some $n_0$, we take $u_m:=u_{m,n_0}$. If $I_V(u_{m,n})>c_m$ for all $n$, then $I_V(u_{m,n}) \rightarrow c_m$. By Theorem \ref{thm:compactness}, $I_V$ satisfies $(PS)_{c_m}$ in $D^{1,2}_0(\Omega)^G$. Therefore, as $I'_V(u_{m,n}) =0$, a subsequence of $(u_{m,n})$ converges to $u_m$ in $D^{1,2}_0(\Omega)^G$. Hence, $u_m$ is a critical point of $I_V$ with $I_V(u_m) = c_m$ and, since $\cE(\Omega)^G$ is closed, $u_m\in\cE(\Omega)^G$.

To prove the last statement of Theorem \ref{thm:main2}, recall that $(\ell_k)$ is increasing. Therefore, if \eqref{condition_1} holds true, then
$$\min_{x\in\rn\smallsetminus\{0\}}\# Gx > \ell_k\qquad\text{for every \ }k = 1,\ldots,m,$$
and, as we just saw, for each such $k$ there exist $k$ pairs of $G$-invariant solutions $\pm v_{k,1}, \ldots , \pm v_{k,k}$ to the problem \eqref{eq:problem} such that $v_{k,1}>0$, $v_{k,2}, \ldots v_{k,k}\in\cE(\o)^G$, 
\[
I_V(v_{k,1})=c(\o)^G\qquad\text{and}\qquad I_V(v_{k,i}) \leq c_k \quad \text{for every} \quad i= 1,\ldots,k.
\]
We set $u_1 := v_{1,1}$, $u_2 := v_{2,2}$ and, for each $2<k\leq m$, choose $u_k \in \{v_{k,2},\ldots,v_{k,k}\}$ such that $u_k \neq v_{k-1,i}$ for every $i=1, \ldots, k-1$. Then, $\pm u_1, \ldots, \pm u_m$ are $m$ different pairs of $G$-invariant solutions to problem \eqref{eq:problem} such that $u_1$ is positive, $u_2, \ldots , u_m$ change sign, 
\[
I_V(u_1)=c(\o)^G\qquad\text{and}\qquad I_{V}(u_k) \leq c_k=\ell_kc_\infty \quad \text{for each} \quad k = 1, \ldots, m,
\]
as claimed.
\end{proof}

\bigskip

\begin{flushleft}
\textbf{Mónica Clapp} and \textbf{Carlos Culebro}\\
Instituto de Matemáticas\\
Universidad Nacional Autónoma de México \\
Campus Juriquilla\\
76230 Querétaro, Qro., Mexico\\
\texttt{monica.clapp@im.unam.mx}\\
\texttt{carlos.erick@im.unam.mx} 
\end{flushleft}
	

\begin{thebibliography}{99}

\bibitem{br}Badiale, Marino; Rolando, Sergio: Elliptic problems with singular potential and double-power nonlinearity. Mediterr. J. Math. 2 (2005), no. 4, 417--436.

\bibitem{bpr}Badiale, Marino; Pisani, Lorenzo; Rolando, Sergio: Sum of weighted Lebesgue spaces and nonlinear elliptic equations. NoDEA Nonlinear Differential Equations Appl. 18 (2011), no. 4, 369–405. 

\bibitem{b}Bartsch, Thomas: Topological methods for variational problems with symmetries. Lecture Notes in Mathematics, 1560. Springer-Verlag, Berlin, 1993.

\bibitem{bw}Bartsch, Thomas; Willem, Michel: Infinitely many nonradial solutions of a Euclidean scalar field equation. J. Funct. Anal. 117 (1993), no. 2, 447–460.

\bibitem{bgm1}Benci, Vieri; Grisanti, Carlo R.; Micheletti, Anna Maria: Existence and non existence of the ground state solution for the nonlinear Schroedinger equations with $V(\infty)=0$. Topol. Methods Nonlinear Anal. 26
(2005), no. 2, 203--219.

\bibitem{bm}Benci, Vieri; Micheletti, Anna Maria: Solutions in exterior domains of null mass nonlinear field equations. Adv. Nonlinear Stud. 6 (2006), no. 2, 171--198.

\bibitem{bl}Berestycki, H.; Lions, P.-L.: Nonlinear scalar field equations I. Existence of a ground state. Arch. Rational Mech. Anal. 82 (1983), no. 4, 313--345.

\bibitem{ccn}Castro, Alfonso; Cossio, Jorge; Neuberger, John M.: A sign-changing solution for a superlinear Dirichlet problem. Rocky Mountain J. Math. 27 (1997), no. 4, 1041–1053.

\bibitem{ccs}Cingolani, Silvia; Clapp, Mónica; Secchi, Simone: Multiple solutions to a magnetic nonlinear Choquard equation. Z. Angew. Math. Phys. 63 (2012), no. 2, 233–248. 

\bibitem{cm}Clapp, Mónica; Maia, Liliane A.: Existence of a positive solution to a nonlinear scalar field equation with zero mass at infinity. Adv. Nonlinear Stud. 18 (2018), no. 4, 745–762.

\bibitem{cmp1}Clapp, Mónica; Maia, Liliane A.; Pellacci, Benedetta: Positive multipeak solutions to a zero mass problem in exterior domains. Commun. Contemp. Math. 23 (2021), no. 2, Paper No. 1950062, 22 pp.

\bibitem{cmp2}Clapp, Mónica; Maia, Liliane; Pellacci, Benedetta: An upper bound for the least energy of a sign-changing solution to a zero mass problem. Adv. Nonlinear Stud. 24 (2024), no. 2, 463–476.

\bibitem{cp}Clapp, Mónica; Pacella, Filomena: Multiple solutions to the pure critical exponent problem in domains with a hole of arbitrary size. Math. Z. 259 (2008), no. 3, 575–589.

\bibitem{cw} Clapp, Mónica; Weth, Tobias: Multiple solutions for the Brezis-Nirenberg problem. Adv. Differential Equations 10 (2005), no. 4, 463–480.

\bibitem{d}Deimling, Klaus: Ordinary differential equations in Banach spaces. Lecture Notes in Mathematics, Vol. 596. Springer-Verlag, Berlin-New York, 1977. 

\bibitem{g}Gidas, Basilis: Euclidean Yang-Mills and related equations. Bifurcation phenomena in mathematical physics and related topics (Proc. NATO Advanced Study Inst., Cargèse, 1979), pp. 243--267, NATO Adv. Study Inst. Ser. C: Math. Phys. Sci., 54, Reidel, Dordrecht-Boston, Mass., 1980.

\bibitem{km}Khatib, Alireza; Maia, Liliane A.: A note on a positive solution of a null mass nonlinear field equation in exterior domains. Proc. Roy. Soc. Edinburgh Sect. A 150 (2020), no. 2, 841–870.

\bibitem{m1}Mederski, Jarosław: Nonradial solutions of nonlinear scalar field equations. Nonlinearity 33 (2020), no. 12, 6349–6380.

\bibitem{m2}Mederski, Jarosław: General class of optimal Sobolev inequalities and nonlinear scalar field equations. J. Differential Equations 281 (2021), 411–441.

\bibitem{w}Willem, Michel: Minimax theorems. Progress in Nonlinear Differential Equations and their Applications, 24. Birkhäuser Boston, Inc., Boston, MA, 1996.
		
\end{thebibliography}
\end{document}